\documentclass[final, nomarks]{dmtcs-episciences}
\usepackage{amssymb}
\usepackage{amsthm}
\usepackage{amsmath} 
\usepackage[english]{babel}
\usepackage[T1]{fontenc}    
\usepackage[utf8]{inputenc} 

\usepackage{etoolbox}
\usepackage{enumerate}

\usepackage{wasysym}

\usepackage{url}

\usepackage{subcaption}

\usepackage{silence}
\WarningFilter{pgf}{Returning node center}

\usepackage{tikz}
\usetikzlibrary{shapes.geometric}

\usepackage{tkz-berge}
\usepackage{tkz-graph}

\theoremstyle{definition}
\newtheorem{maar}{Määritelmä}[section]
\newtheorem{lemma}[maar]{Lemma}

\newtheorem{definition}[maar]{Definition}
\newtheorem{corollary}[maar]{Corollary}
\newtheorem{theorem}[maar]{Theorem}
\newtheorem{example}[maar]{Example}

\newtheorem{fact}{Fact}

\theoremstyle{remark}

\newtheorem*{huom*}{Huomautus}


\newcommand{\abs}[1]{\left\vert{#1}\right\vert}
\newcommand{\sep}{\mid}

\usepackage{mathtools}
\newcommand{\ceil}[1]{\left\lceil #1\right\rceil}

\DeclareMathOperator{\cc}{cc}

\newcommand{\koodit}{C}
\newcommand{\apu}{A}

\newcommand{\koko}{k}
\newcommand{\m}{m}
\newcommand{\K}{m}

\usepackage[mathcal]{euscript}
\newcommand{\clause}{\mathcal{C}}
\newcommand{\instance}{F}
\newcommand{\literals}{U}
\newcommand{\assignment}{A}
\newcommand{\true}{\emph{true }}
\newcommand{\false}{\emph{false }}

\newcommand{\auxiliary}{v_{-1}}
\newcommand{\graphminus}[2]{#1 - #2}
\newcommand{\koodiswap}[3]{#1[#2 \leftarrow #3]}

\usepackage[numbers]{natbib}

\author[Ville Junnila et al.]{Ville Junnila
	\and Tero Laihonen
	\and Havu Miikonen}
\title{New Results on Vertices that Belong to Every Minimum Locating-Dominating Code}
\affiliation{
	Department of Mathematics and Statistics,
	University of Turku, Turku, Finland}
\keywords{locating-dominating code, forced vertex, algorithmic complexity, number of different codes, characterization}
\usepackage{hyperref}

\begin{document}
\publicationdata{vol. 28:2}{2026}{20}{10.46298/dmtcs.16459}{2025-09-03; 2025-09-03; 2026-03-17}{2026-03-19}
\maketitle

\begin{abstract}
\vspace{6pt}
  Locating-dominating codes have been studied widely since their introduction in the 1980s by Slater and Rall. In this paper, we concentrate on vertices that must belong to all minimum locating-dominating codes in a graph. We  call them \emph{min-forced vertices}. We show that the number of min-forced vertices in a connected nontrivial graph of order $n$ is bounded above by  $\frac{2}{3}\left(n -\gamma^{LD}(G)\right)$, where $\gamma^{LD}(G)$ denotes the cardinality of a minimum locating-dominating code. This implies that the maximum ratio between the number of min-forced vertices and the order of a connected nontrivial graph is at most  $\frac{2}{5}$. Moreover, both of these bounds can be attained. In particular, the ratio $\frac{2}{5}$ is obtained by paths of order $5m$ having a unique minimum locating-dominating code of size $2m$. Furthermore, as a natural extension, we determine the number of different minimum locating-dominating codes in paths of all orders. In addition, we show that deciding whether a vertex is min-forced is co-NP-hard. 
\end{abstract}

\section{Introduction}

Let $G = (V(G), E(G))$ be a simple, finite and undirected graph. We often denote the order of a graph $\abs{V(G)}$ by $n$ and the set of vertices $V(G)=\{v_1,v_2,\dots, v_n\}$. A graph is \emph{nontrivial} if $n\ge 2.$ A nonempty subset $S \subseteq V(G)$ is called a \emph{code}.
Elements of a code are called \emph{codewords}. An edge between $u,v \in V(G)$ is denoted by $\{u,v\} = uv$.
We denote by $N(v)$ the \emph{open neighbourhood} of a vertex $v$ which is defined as $N(v) = \{ u\in V \sep vu\in E(G) \}$. The \emph{closed neighbourhood} of a vertex $v \in V$ is $N[v] = N(v) \cup \{v\}$. If we wish to emphasize the underlying graph, we denote $N_G(v)$ and $N_G[v].$
Distinct vertices $u$ and $v$ are called \emph{closed twins} if $N[u] = N[v]$, and \emph{open twins} if $N(u) = N(v)$.
Furthermore, vertices are \emph{twins} if they are either closed or open twins.
By the notation $G - S = G[V(G) \setminus S]$ (respectively, $G - v$ for a single vertex $v$) we mean the graph induced by the vertices of $G$ not including $S$ (resp. $v$).
We use the following shorthand notation: $S[u \leftarrow v] = (S \setminus \{u\}) \cup \{v\}$ for the code where the codeword $u$ is swapped for $v$.
For a subset $E' \subseteq E(G)$, we also use the notation $G[E']$ to denote the graph induced by the edge set $E'$, that is, the graph with the edges of $E'$ and their endpoints.

\begin{definition}
    Let $G$ be a graph and let $S \subseteq V(G)$ be a code.
    The \emph{identifying set}, or $I$-set, of a vertex $v$ is the set of codewords in the closed neighbourhood of $v$. Denote
    \[
        I_G(S;v)  = S \cap N_G[v].
    \]
    We may omit the graph or the code in the notation if they are clear from the context, that is, $I_G(S;v) = I_G(v) = I(S;v) = I(v)$. 
\end{definition}

\begin{definition}
    A code $S \subseteq V(G)$ is a \emph{locating-dominating code}, or \emph{LD-code}, if for all distinct non-codewords $x, y \in V(G) \setminus S$ their $I$-sets are nonempty and
    \(I(x) \neq I(y)\). The cardinality of the minimum locating-dominating codes in a graph $G$ is denoted by $\gamma^{LD}(G)$ and it is called the \emph{locating-dominating number} of $G$.
\end{definition}

Slater and Rall originally introduced locating-dominating codes in the $1980$s, see for example, \cite{rall1984location, slater1987domination, slater1988dominating}.
A lot of research has been done (see the numerous articles in \cite{jeanlobwww})  considering the minimum possible cardinality of a locating-dominating code in graphs. In this paper, we continue this work. In particular, we are interested in certain critical vertices of a graph, which belong to all or no  minimum locating-dominating codes.

\begin{definition}
A \emph{minimum-forced vertex}, or \emph{min-forced vertex}, is a vertex $v \in V(G)$ such that it is in every minimum locating-dominating code in $G$. In other words, $v \in \bigcap_{S \text{ is a minimum LD-code}} S$.  
\end{definition}

Similarly to the min-forced vertices, we define the following related concept.

\begin{definition}
    A \emph{minimum-void vertex}, or \emph{min-void vertex}, is a vertex $v \in V(G)$ such that it is in no minimum locating-dominating code in $G$. In other words, $v \notin \bigcup_{S \text{ is a minimum LD-code}} S$. 
\end{definition}

These  vertices have been considered in the case of \emph{trees}, for example, in Blidia and Lounes~\cite{Blidia2009minforcedtrees}. Similar concepts for more general graphs have been considered in other contexts as well, see, for example,  \cite{Boros} for stable sets and  \cite{Hakanen_2022} for metric bases. Notice also the resemblance of the min-forced vertices to the core vertices of a dominating set in \cite{BOUQUET20219}.

In this paper, we first show a characterization of min-forced vertices that reduces the problem to LD-codes in smaller subgraphs. In Section~\ref{ratiosection}, we provide, for example, the maximum ratio between the number of min-forced vertices and the order of a graph. We show that the maximum ratio is $\frac{2}{5}$.  The ratio $\frac{2}{5}$ is attained by paths of order $5k$ having a unique locating-dominating code of size $2k$.  In Section~\ref{pathsection}, we further determine the exact number of different minimum locating-dominating codes in paths of all orders. In the final section, we also show that it is computationally challenging to decide whether or not a vertex is min-forced in a graph. Notice that algorithmic complexity in the case where all the codewords are min-forced, \textit{i.e.}, the LD-code is unique, is discussed in \cite{Hudry2019}.

\section{A characterization of min-forced vertices}

In the following theorem, we give a characterization of a min-forced vertex.

\begin{theorem}
\label{theo:forced-characterization}
    Let $G$ be a graph.
    A vertex $v \in V(G)$ is min-forced if and only if either
    \begin{enumerate}[(i)]
        \item $v$ is isolated, 
        \item $\gamma^{LD}(G-v) > \gamma^{LD}(G)$, or
        \item  $\gamma^{LD}(G-v) = \gamma^{LD}(G)$, and for the graph $G-v$ there exists no minimum LD-code $S$ such that $I_G(S;v) \neq \emptyset$ and $I_G(S;v)\neq I_G(S;w)$ for all $w \in V(G) \setminus (S \cup \{v\})$.
    \end{enumerate}
\end{theorem}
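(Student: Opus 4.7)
The statement is an iff split over three sufficient conditions; I will handle the two directions separately. A single observation underlies everything: if $S$ is a code of $G$ with $v \notin S$, then for every $w \in V(G) \setminus \{v\}$ we have $I_G(S;w) = I_{G-v}(S;w)$, because $v \notin S$ cannot lie in the intersection. In particular, an LD-code of $G$ avoiding $v$ is automatically an LD-code of $G-v$ of the same cardinality, and conversely a minimum LD-code of $G-v$ that satisfies the conditions in (iii) is an LD-code of $G$ of the same cardinality.

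For the ``if'' direction, (i) is immediate since $N[v]=\{v\}$ forces $v$ into every dominating code. For (ii) and (iii), suppose for contradiction that some minimum LD-code $S$ of $G$ avoids $v$; by the observation, $S$ is a minimum LD-code of $G-v$ of size $\gamma^{LD}(G)$. Under (ii) this violates $\gamma^{LD}(G-v) > \gamma^{LD}(G)$; under (iii), $S$ itself witnesses the existence forbidden in (iii), because it dominates $v$ in $G$ and distinguishes $v$ from every other non-codeword in $G$.

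For the ``only if'' direction, assume $v$ is min-forced and not isolated, so (i) fails. The plan is to establish $\gamma^{LD}(G-v) \geq \gamma^{LD}(G)$; then equality forces (iii) via the observation (any extending minimum LD-code of $G-v$ would yield a minimum LD-code of $G$ avoiding $v$, contradicting min-forcedness), while strict inequality is exactly (ii). The main obstacle is ruling out $\gamma^{LD}(G-v) < \gamma^{LD}(G)$.

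To rule it out, let $T$ be a minimum LD-code of $G-v$. A direct check shows $T \cup \{v\}$ is always an LD-code of $G$ (adding $v$ cannot collapse any previously distinct pair of $I$-sets and it dominates $v$), so $|T| \geq \gamma^{LD}(G) - 1$ and hence $|T| = \gamma^{LD}(G) - 1$. Since $|T| < \gamma^{LD}(G)$, $T$ itself cannot be LD of $G$, and the failure must involve $v$: either (P) $v$ has no neighbor in $T$, or (Q) there is a non-codeword $w \neq v$ with $I_G(T;v) = I_G(T;w) \neq \emptyset$, and such a $w$ is unique because $T$ is LD of $G-v$. In case (P), I augment $T$ by any $u \in N(v)$ (automatically $u \notin T$); in case (Q), I augment $T$ by the offending $w$. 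A routine case analysis on whether each non-codeword is a neighbor of the added vertex shows that the augmentation is an LD-code of $G$ of size $\gamma^{LD}(G)$ that avoids $v$, contradicting min-forcedness and closing the gap.
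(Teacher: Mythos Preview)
Your proof is correct and follows essentially the same route as the paper's: the same correspondence between minimum LD-codes of $G$ avoiding $v$ and minimum LD-codes of $G-v$, and the same two augmentation constructions (add a neighbour $u$ of $v$ in the undominated case, add the unique clashing non-codeword $w$ in the indistinguishable case) to manufacture a minimum LD-code avoiding $v$. The only cosmetic difference is organizational: the paper runs the trichotomy on $\gamma^{LD}(G-v)$ and deduces ``$v$ isolated'' from the strict-inequality case, whereas you assume ``not isolated'' up front and rule out the strict inequality; your extra observation that $T\cup\{v\}$ is always an LD-code of $G$ (pinning $|T|=\gamma^{LD}(G)-1$) is a clean touch the paper does not make explicit.
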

\begin{proof}
    First, we assume that one of the conditions (i)--(iii) holds and show that then $v$ is min-forced.
    \begin{enumerate}[(i)]
        \item If $v$ is isolated, then it is min-forced.
        
        \item If the graph $G-v$ cannot be locating-dominated using $\gamma^{LD}(G)$ vertices, then clearly the graph $G$ cannot be locating-dominated using $\gamma^{LD}(G)$ vertices without using the vertex $v$, therefore $v$ is min-forced in $G$.

        \item 
        Assume that for all minimum LD-codes $S$ in the graph $G-v$ we have either $I_G(S;v) = \emptyset$ or $I_G(S;v) = I_G(S;w)$ for some $w \in V(G) \setminus ( S \cup \{v\})$.
        If there is an LD-code $S'$ in $G$ of order $\gamma^{LD}(G)$ such that $v \notin S'$, then $S'$ is also a minimum LD-code in $G-v$.
        This is a contradiction: we assumed that no minimum LD-code in $G-v$ both distinguishes and dominates $v$.
        Therefore, the graph $G$ cannot have minimum LD-codes $S'$ that do not contain $v$, in other words, $v$ is min-forced in $G$.
    \end{enumerate}

    Next, we assume that $v$ is min-forced and we show that one of the conditions (i)--(iii) follows.
    When the vertex $v$ is removed from $G$, the locating-dominating number of the graph may increase, decrease or remain the same.
    We consider each case separately.

    \begin{enumerate}[(a)]
        \item Suppose first that $\gamma^{LD}(G-v) < \gamma^{LD}(G)$.
        We will show that the vertex $v$ is isolated, meeting the condition (i).
        
        Let $S$ be an LD-code in $G-v$ such that $\abs{S} = \gamma^{LD}(G)-1$.
        Notice that such a code always exists.
        If the vertex $v$ has no neighbours in $S$ (in other words, $I_G(S;v) = \emptyset$), then there are two possible cases:
        \begin{enumerate}[(1)]
            \item If $N_G(v) = \emptyset$, then $v$ is isolated.
            \item There exists a vertex $u \in N_G(v)$.
            We know that $u \notin S$, because the $I$-set of $v$ under $S$ is empty.
            The set $S \cup \{u\}$ is an LD-code in $G$, because the $I$-sets of all  vertices not contained in $S \cup \{u\}$ are distinct and nonempty:
            no vertex $w \in V(G) \setminus \{v\}$ has $I(S \cup \{u\}; w) = \{u\}$, because then $I(S; w) = \emptyset$, a contradiction.
            The $I$-sets that are distinct under the code $S$ are distinct under $S \cup \{u\}$. 
            This contradicts the fact that $v$ is min-forced.
        \end{enumerate}
        If the vertex $v$ has neighbours in $S$, then $I_G(S; v) \neq \emptyset$. Furthermore, $I_G(S; v) = I_G(S; w)$ for at least one vertex $w \in V(G) \setminus (S \cup \{v\})$, since otherwise $S$ would be an LD-code in $G$, leading to a contradiction as $\abs{S} < \gamma^{LD}(G)$.  If there were two or more such vertices with the same $I$-set, then $S$ would not be locating-dominating in $G-v$ (a contradiction). Hence, we may assume that there is exactly one such vertex $w$. 
        Now, $S \cup \{w\}$ is a minimum LD-code in $G$.
        Therefore, $v$ is not min-forced, which is a contradiction.

        \item If $\gamma^{LD}(G-v) > \gamma^{LD}(G)$, the condition (ii) holds.

        \item Assume finally that $\gamma^{LD}(G-v) = \gamma^{LD}(G)$.
        
        If there exists a minimum LD-code $S$ for $G-v$ such that $I_G(S; v) \neq \emptyset$ and $I_G(S; v)\neq I_G(S; w)$ for all $w \in V(G) \setminus (S \cup \{v\})$, then $S$ is also an LD-code in $G$.
        The vertex $v$ does not belong to all minimum LD-codes in $G$, in particular, $v$ does not belong to $S$.
        This contradicts the assumption that $v$ is min-forced.
    \end{enumerate}\vspace{-2\baselineskip}
\end{proof}

In the next example, we demonstrate how Theorem~\ref{theo:forced-characterization} can be used to show that a vertex is min-forced.

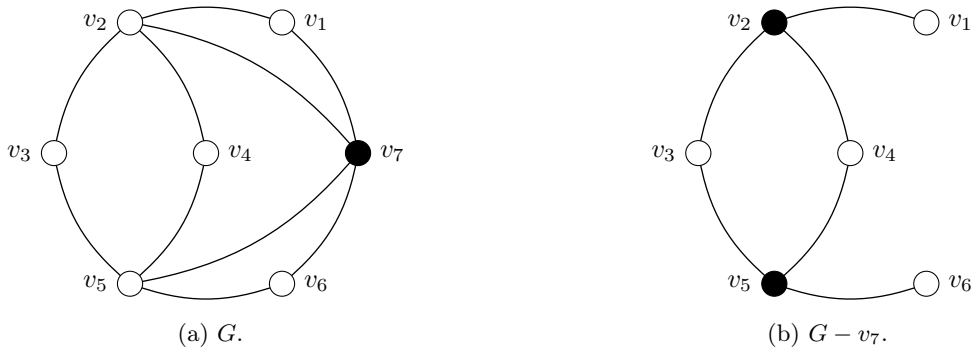
\begin{figure}
    \centering
     \begin{subfigure}[t]{0.49\textwidth}
        \centering
        \begin{tikzpicture}[yscale=2, xscale=2, rotate=0]

    \renewcommand*{\EdgeLineWidth}{ 0.5pt}
    \SetVertexMath
    \SetVertexLabelOut

    {
    \SetVertexNoLabel
    \tikzstyle{VertexStyle}=[minimum size=1pt,inner sep=0pt]
    \Vertices{circle}{1,2,3,4,5,6}
    \Vertex[x=0, y=0]{7}
    }

    \tikzstyle{EdgeStyle}=[bend right=20]
    \Edge(1)(2)
    \Edge(1)(3)
    \Edge(5)(1)
    \Edge(6)(1)
    \Edge(2)(3)
    \Edge(3)(4)
    \Edge(4)(5)
    \Edge(5)(6)
    \Edge(7)(3)
    \Edge(5)(7)

    \tikzstyle{VertexStyle}=[circle,draw, fill=black]
    \SO[unit=0](1){v_7}

    \tikzstyle{VertexStyle}=[circle,draw, fill=white]
    \SO[unit=0](2){v_1}
    \SO[unit=0, Lpos=180](3){v_2}
    \SO[unit=0, Lpos=180](5){v_5}
    \SO[unit=0](6){v_6}
    \SO[unit=0, Lpos=180](4){v_3}
    \SO[unit=0](7){v_4}
    
\end{tikzpicture}
        \caption{$G$.}
     \end{subfigure}
     \hfill
     \begin{subfigure}[t]{0.49\textwidth}
        \centering
        \begin{tikzpicture}[yscale=2, xscale=2, rotate=0]

    \renewcommand*{\EdgeLineWidth}{ 0.5pt}
    \SetVertexMath
    \SetVertexLabelOut

    {
    \SetVertexNoLabel
    \tikzstyle{VertexStyle}=[minimum size=1pt,inner sep=0pt]
    \Vertices{circle}{1,2,3,4,5,6}
    \Vertex[x=0, y=0]{7}
    }

    \tikzstyle{EdgeStyle}=[bend right=20]

    \Edge(2)(3)
    \Edge(3)(4)
    \Edge(4)(5)
    \Edge(5)(6)
    \Edge(7)(3)
    \Edge(5)(7)

    \tikzstyle{VertexStyle}=[circle,draw, fill=black]
    \SO[unit=0, Lpos=180](3){v_2}
    \SO[unit=0, Lpos=180](5){v_5}

    \tikzstyle{VertexStyle}=[circle,draw, fill=white]
    \SO[unit=0](2){v_1}
    \SO[unit=0](6){v_6}
    \SO[unit=0, Lpos=180](4){v_3}
    \SO[unit=0](7){v_4}

\end{tikzpicture}
        \caption{$\graphminus{G}{v_7}$.}
		
     \end{subfigure}
    \caption{The black vertices are shown to be min-forced in their respective graphs.}

    \label{fig:deletion-example}
\end{figure}

\begin{example}
    Consider the graph $G$ illustrated in Figure~\ref{fig:deletion-example}(a).
    The location-domination number of $G$ is $3$; the set $\{v_2, v_3, v_7\}$ is an LD-code in $G$, and two codewords are not enough to distinguish the remaining 5 vertices, since only three nonempty $I$-sets can be formed using two codewords.
    We claim that the vertex $v_7$ is min-forced in $G$.
    Let us examine the graph $\graphminus{G}{v_7}$, illustrated in Figure~\ref{fig:deletion-example}(b), and its minimum LD-codes. 

    The vertex $v_2$ is min-forced in $\graphminus{G}{v_7}$ by Theorem~\ref{theo:forced-characterization}(ii).
    Indeed, the graph $\graphminus{G}{\{v_7, v_2\}}$ is composed of an isolated vertex and the star graph $K_{1,3}$, thus $\gamma^{LD}(\graphminus{G}{\{v_7, v_2\}}) = 1 + 3 > 3$.
    By the same argument, the vertex $v_5$ is min-forced in $\graphminus{G}{v_7}$ as well.
    Therefore, all minimum LD-codes $S$ in $\graphminus{G}{v_7}$ must contain both $v_2$ and $v_5$.

    The vertices $v_3$ and $v_4$ are twins, so to distinguish them, at least one of them must be in each LD-code in $\graphminus{G}{v_7}$.
    Together with the two min-forced vertices, this gives us $\gamma^{LD}(\graphminus{G}{v_7}) \geq 3$.
    It is easy to verify that the sets $\{v_2, v_3, v_5\}$ and $\{v_2, v_4, v_5\}$ are LD-codes of order $3$ in $\graphminus{G}{v_7}$, and by the reasoning above, they are the only minimum LD-codes in $\graphminus{G}{v_7}$.
    Additionally, we see that $\gamma^{LD}(G-v_7) = \gamma^{LD}(G)$, and to show that $v_7$ is min-forced in $G$, we use Theorem~\ref{theo:forced-characterization}(iii).

    Now we consider the minimum LD-codes of $\graphminus{G}{v_7}$ in the graph $G$.
    For the code $S_1 = \{v_2, v_3, v_5\}$, we observe that
    $I_G(S_1; v_7) = \{v_2, v_5\} = I_G(S_1; v_4)$.
    Similarly, for $S_2 = \{v_2, v_4, v_5\}$, we see that $I_G(S_2; v_7) = \{v_2, v_5\} = I_G(S_2; v_3)$.
    We conclude that there are no minimum LD-codes in $\graphminus{G}{v_7}$ that distinguish the vertex $v_7$ in $G$; the condition (iii) of Theorem~\ref{theo:forced-characterization} is met and therefore, $v_7$ is min-forced in $G$.
\end{example}

As we saw above, determining whether a vertex $v \in V(G)$ is min-forced, can involve the computation of the location-domination number of a graph of order $\abs{V(G)}-1$, which can be tedious in general.
In Section~\ref{sec:complexity}, we prove that determining whether a vertex is min-forced is actually a co-NP-hard problem. Therefore, a simple method regarding a min-forced vertex \emph{in general} can be hard to find. However, for \emph{trees}, a characterization of min-forced vertices and an efficient algorithm for determining them is given in Lemma~3 and Theorem~3 of \cite{Blidia2009minforcedtrees}.

\section{A tight bound for the maximum number of min-forced vertices}
\label{ratiosection}

A natural question regarding the min-forced and min-void vertices is the following. 
What is the maximum ratio of such vertices in a graph with respect to the order $n$? 
For min-void vertices this is easily answered.
Indeed, for any $h\ge 2$, take the graph $G_h$ of order $n=2^h-1+h$,  where $h$ vertices form an independent set and the rest of the vertices have the $2^h-1$ different and nonempty subsets of the independent set as their neighbourhoods.
It is well known (and also easy to check) that the independent set is a minimum LD-code giving $\gamma^{LD}(G_h)=h$.
By \cite[Lemma 7]{Hudry2019}, the LD-code is unique, and hence, the graph has as many min-void vertices as there are non-codewords (that is, the maximum amount $n-\gamma^{LD}(G_h)=2^h-1$). Notice that here $\gamma^{LD}(G_h)$ is as small as possible with respect to $n$ since for any graph it holds that $n\le 2^{|S|}-1+|S|$ if $S$ is a locating-dominating code.
Thus, the maximum number $k$ of min-void vertices satisfies
$$ k=\frac{2^h-1}{2^h-1+h}n,$$
where the ratio $k/n$ approaches $1$ as $h$ grows.

However, the question for min-forced vertices is a harder problem. This question will be answered in Corollary~\ref{Cor_max_number_of_min-forced}.
To that end, inspired by \cite{hernando2018locating} (and to some extent also by~\cite{Hakanen_2022} for resolving sets), we introduce the concept of the colour graph associated with a locating-dominating code.
Our colour graph is different in essential ways from the the so-called $S$-associated graph defined in \cite{hernando2018locating} (and the colour graph of \cite{Hakanen_2022}), for example, it contains the codewords (as vertices) as well as their edges between non-codewords and also has a crucial auxiliary vertex, which is not a vertex in the original graph. The extension of the $S$-associated graph of \cite{hernando2018locating} to our colour graph is needed in order to prove the optimal result in Theorem~\ref{theo:forced-40-bound}.

\begin{definition} 
    Let $G$ be a graph and let $S \subseteq V(G)$ be a locating-dominating code in $G$.
    The vertices of $G$ are denoted $V(G) = \{v_1, \dots, v_n\}$.
    We define the \emph{colour graph} $G_S$ as follows: $V(G_S) = V(G) \cup \{\auxiliary\}$ and \[E(G_S) = \left\{\{x, y\} \in \binom{V(G_S)}{2} \sep u \in S,\ x, y \notin (S \setminus \{u\}) \text{ and } I_G(S\setminus \{u\}; x) = I_G(S\setminus \{u\}; y)\right\},\]
    where we interpret that $I_G(S'; \auxiliary) = \emptyset$ for all codes $S'\subseteq V(G)$.
    We call vertices in $V(G)$ \emph{true vertices} and the vertex $\auxiliary$ an \emph{auxiliary vertex}.
    Associating a colour with each codeword $u \in S$, we assign the colour $u$ to the edge $xy \in E(G_S)$ if $I_G(S\setminus \{u\}; x) = I_G(S\setminus \{u\}; y)$.
\end{definition}
An example of a colour graph is given in Figure~\ref{fig:värigraafiesim}.
Notice that $G_S$ is a simple graph, not a multigraph, and that each edge has exactly one colour.
The idea behind the colour graph is that if there is an edge $xy \in E(G_S)$ with colour $u$, then $u$ is the only codeword that distinguishes $x$ and $y$.
If either $x$ or $y$ is the auxiliary vertex $\auxiliary$, say, $y = \auxiliary$, then $u$ is the only codeword that dominates $x$.
For edges $ux \in E(G_S)$, where $ u \in S$, the interpretation is that if the vertex $u$ was removed from $S$, then the $I$-sets  of $u$ and $x$ under the code $S \setminus \{u\}$ would be equal.
Finally, the edge $u\auxiliary \in E(G_S)$ means that $u$ has no codeword neighbours in $G$.
These observations immediately imply the following result.

\begin{lemma}
    \label{lemma:at-least-1-edge}
    Let $G$ be a graph and let $S$ be a minimal locating-dominating code in $G$.
    Then for each codeword $u$ there is an edge in $G_S$ with the colour $u$.
\end{lemma}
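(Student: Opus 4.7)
The plan is to exploit the minimality of $S$ directly: for every codeword $u \in S$, the set $S \setminus \{u\}$ fails to be a locating-dominating code, and I would translate each possible failure mode into an edge of $G_S$ coloured $u$.

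First I would fix $u \in S$ and observe that the non-codewords relative to $S \setminus \{u\}$ are exactly the vertices in $(V(G) \setminus S) \cup \{u\}$. Since $S \setminus \{u\}$ is not an LD-code, one of two things must happen: either some $x \in (V(G) \setminus S) \cup \{u\}$ has $I_G(S \setminus \{u\}; x) = \emptyset$ (a domination failure), or there exist distinct $x, y \in (V(G) \setminus S) \cup \{u\}$ with $I_G(S \setminus \{u\}; x) = I_G(S \setminus \{u\}; y)$ (a location failure). I would handle these two cases in turn.

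In the domination-failure case, the convention $I_G(S'; \auxiliary) = \emptyset$ immediately gives that $\{x, \auxiliary\}$ satisfies the defining condition of $E(G_S)$ with colour $u$, regardless of whether $x$ is $u$ itself (so that $u$ has no codeword neighbour in $S \setminus \{u\}$) or a non-codeword whose sole $S$-dominator was $u$. In the location-failure case, since $S$ is an LD-code but $S \setminus \{u\}$ is not, the collision between $x$ and $y$ must be resolved in $S$ precisely by $u$; that is, $u$ lies in exactly one of $I_G(S; x), I_G(S; y)$. Then $x, y \notin S \setminus \{u\}$ and $I_G(S \setminus \{u\}; x) = I_G(S \setminus \{u\}; y)$, so $\{x, y\} \in E(G_S)$ with colour $u$ by definition.

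I do not anticipate a genuine obstacle: the only mild subtlety is that $u$ itself is allowed to play the role of $x$ or $y$ in the argument above, but the definition of $E(G_S)$ is already written so that $u$ counts as a legitimate endpoint (since $u \notin S \setminus \{u\}$), and the auxiliary vertex $\auxiliary$ absorbs the pure domination failures. Thus the proof amounts to a careful case analysis unwinding the definitions of minimality and of the colour graph.
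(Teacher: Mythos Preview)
Your proposal is correct and matches the paper's approach: the paper does not spell out a proof but simply states that the lemma follows immediately from the observations preceding it (namely, that a $u$-coloured edge records precisely the pairs that $S\setminus\{u\}$ fails to separate or dominate, with the auxiliary vertex absorbing domination failures). Your case analysis is exactly the unpacking of those observations, with the minor caveat that the sentence about $u$ lying in exactly one of $I_G(S;x),I_G(S;y)$ is superfluous and not quite accurate when $u\in\{x,y\}$---but you already note this and the conclusion follows directly from the definition of $E(G_S)$ regardless.
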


\begin{figure}
    \centering
     \begin{subfigure}[t]{0.49\textwidth}
        \centering
        \begin{tikzpicture}[yscale=0.55, xscale=0.5, rotate=0]

    \renewcommand*{\EdgeLineWidth}{ 0.5pt}
    \SetVertexMath
    \SetVertexLabelOut

    {
    \SetVertexNoLabel
    \tikzstyle{VertexStyle}=[minimum size=1pt,inner sep=0pt]

    \Vertex[x=8, y=0]{1}
    \Vertex[x=4, y=2.67]{2}
    \Vertex[x=8, y=2.67]{3}
    \Vertex[x=4, y=8]{4}
    \Vertex[x=12, y=2.67]{5}
    \Vertex[x=0, y=5.33]{6}
    \Vertex[x=0, y=2.67]{7}
    \Vertex[x=4, y=0]{8}
    \Vertex[x=4, y=5.33]{9}

    }

    \tikzstyle{EdgeStyle}=[bend right=0]
    \Edge(1)(5)
    \Edge(1)(8)
    \Edge(2)(6)
    \Edge(2)(7)
    \Edge(2)(8)
    \Edge(3)(8)
    \Edge(4)(6)
    \Edge(4)(7)    
    \Edge(4)(9)
    \Edge(6)(7)
    \Edge(6)(8)
    \Edge(6)(9)
    \Edge(7)(8)
    \Edge(7)(9)
    \tikzstyle{EdgeStyle}=[bend right=40, looseness=1.1]
    \Edge(8)(9)

    \tikzstyle{VertexStyle}=[circle,draw, fill=black]
    \SO[unit=0, Lpos=90](1){v_1}
    \SO[unit=0, Lpos=90](2){v_2}
    \SO[unit=0, Lpos=180](7){v_7}
    \SO[unit=0, Lpos=180](8){v_8}

    \tikzstyle{VertexStyle}=[circle,draw, fill=white]
    \SO[unit=0, Lpos=90](3){v_3}
    \SO[unit=0](4){v_4}
    \SO[unit=0, Lpos=90](5){v_5}
    \SO[unit=0, Lpos=180](6){v_6}
    \SO[unit=0](9){v_9}

\end{tikzpicture}
        \caption{The graph $G$ with a minimum LD-code \mbox{$S = \{v_1, v_2, v_7, v_8\}$}.}
     \end{subfigure}
     \hfill
     \begin{subfigure}[t]{0.49\textwidth}
        \centering
        \begin{tikzpicture}[yscale=0.55, xscale=0.5, rotate=0]

    \renewcommand*{\EdgeLineWidth}{ 0.5pt}
    \SetVertexMath
    \SetVertexLabelOut

    {
    \SetVertexNoLabel
    \tikzstyle{VertexStyle}=[minimum size=1pt,inner sep=0pt]

    \Vertex[x=8, y=0]{1}
    \Vertex[x=4, y=2.67]{2}
    \Vertex[x=8, y=2.67]{3}
    \Vertex[x=4, y=8]{4}
    \Vertex[x=12, y=2.67]{5}
    \Vertex[x=0, y=5.33]{6}
    \Vertex[x=0, y=2.67]{7}
    \Vertex[x=4, y=0]{8}
    \Vertex[x=4, y=5.33]{9}

    }

    \tikzstyle{VertexStyle}=[circle,draw, fill=white]
    \Vertex[x=8, y=6.33, Lpos=45]{\auxiliary}

    \tikzstyle{LabelStyle}=[fill=white, circle,minimum size=12pt,inner sep=1pt,scale=.8]

    \tikzstyle{EdgeStyle}=[black]
    \Edge[label=$v_1$](1)(3)
    \Edge[label=$v_1$](5)(\auxiliary)
    \Edge[label=$v_2$](2)(6)
    \Edge[label=$v_2$](2)(9)
    \Edge[label=$v_2$](6)(9)
    \Edge[label=$v_7$](\auxiliary)(4)
    \Edge[label=$v_7$](3)(9)
    \Edge[label=$v_7$](7)(6)
    \Edge[label=$v_8$](3)(\auxiliary)
    \Edge[label=$v_8$](4)(9)

    \tikzstyle{VertexStyle}=[circle,draw, fill=black]
    \SO[unit=0](1){v_1}
    \SO[unit=0, Lpos=0](2){v_2}
    \SO[unit=0, Lpos=180](7){v_7}
    \SO[unit=0, Lpos=180](8){v_8}

    \tikzstyle{VertexStyle}=[circle,draw, fill=white]
    \SO[unit=0](3){v_3}
    \SO[unit=0, Lpos=180](4){v_4}
    \SO[unit=0, Lpos=180](5){v_5}
    \SO[unit=0, Lpos=180](6){v_6}
    \SO[unit=0](9){v_9}

\end{tikzpicture}
        \caption{The colour graph $G_S$.}
		
     \end{subfigure}
    \caption{The vertex $v_8$ is min-forced as can be verified using Theorem~\ref{theo:forced-characterization}(ii).}

    \label{fig:värigraafiesim}
\end{figure}
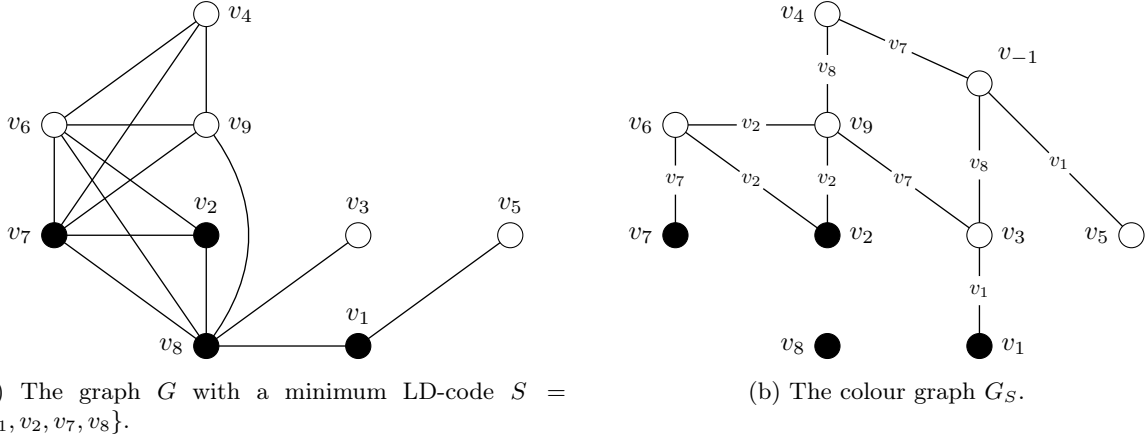

Despite the differences between the colour graph given here and the graphs considered in~\cite{hernando2018locating} and \cite{Hakanen_2022}, they share many similar properties like the ones stated in Proposition 5 of~\cite{hernando2018locating} and Lemma 25 of~\cite{Hakanen_2022}, as we shall see in the next lemma.

\begin{lemma}
\label{lemma:G_S-properties}
Let $G$ be a graph and let $S$ be a locating-dominating code in $G$.
    \begin{enumerate}[(i)]
        \item There are no edges $uv \in E(G_S)$ such that $u, v \in S$.
        
        \item Edges $uv$ of $G_S$
        with $u \in S$
        have the colour $u$.
        
        \item Two edges of $G_S$ between non-codewords that share an endpoint have distinct colours.
        
        \item If there is an edge $xy \in E(G_S)$ with the colour $u$, where $x, y \notin S$, then exactly one of $x$ and $y$ is a neighbour of $u$ in $G$.
        This holds regardless of whether $x$ and $y$ are both true vertices or not.
        
        \item Let $u$ be a codeword and $x, y \in V(G_S) \setminus S$.
        If there exists two of the three edges $ux$, $uy$ and  $xy$  in $E(G_S)$ with the colour $u$, then the remaining edge exists and has the colour $u$.

        \item There can be at most two edges incident with $u \in S$ in $G_S$.
        
        \item Every cycle in $\graphminus{G_S}{S}$ has an even number of edges with the colour $u$ for each $u$ appearing on the cycle. The graph $\graphminus{G_S}{S}$ is bipartite.
        
        \item Let $W = w_1 w_2 \cdots w_k$ be a walk with no repeated edges in $\graphminus{G_S}{S}$. If $W$ has an even number of edges with the colour $u$ for each $u$ appearing in the walk, then it is a closed walk.
    \end{enumerate} 
\end{lemma}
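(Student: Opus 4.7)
The plan is to read off every assertion from a single underlying observation: an edge $xy$ of colour $u$ in $G_S$ is by definition the equality $I_G(S\setminus\{u\};x) = I_G(S\setminus\{u\};y)$, which says exactly that the genuine $I$-sets $I_G(S;x)$ and $I_G(S;y)$ may differ only in whether they contain $u$. I will combine this with the LD property of $S$ (distinct nonempty $I$-sets at distinct non-codewords of $G$) and with the convention $I_G(S';\auxiliary) = \emptyset$. Parts (i) and (ii) are immediate: an edge of colour $w$ forces its endpoints to lie outside $S\setminus\{w\}$, so two codeword endpoints would collapse to $u=v=w$, and a codeword endpoint $u$ must equal the colour $w$.

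For (iii) I would suppose that $xy$ and $xz$ both carry colour $u$ and observe that $I_G(S;x),\ I_G(S;y),\ I_G(S;z)$ all lie in $\{A,\ A\cup\{u\}\}$ for $A = I_G(S\setminus\{u\};x)$; pigeonhole forces two of them to coincide, contradicting LD directly if all three are true non-codewords, or contradicting LD/domination in the subcase where $\auxiliary$ appears (which forces $A=\emptyset$). For (iv) the same dichotomy leaves exactly one endpoint adjacent to $u$ in $G$: if both $x,y$ are true non-codewords then LD forces different $I$-sets, and if $y=\auxiliary$ then the empty-$I$-set convention and domination force $I_G(S;x) = \{u\}$. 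For (v) I will simply chase the three equalities $I_G(S\setminus\{u\};u) = I_G(S\setminus\{u\};x)$, $I_G(S\setminus\{u\};u) = I_G(S\setminus\{u\};y)$, $I_G(S\setminus\{u\};x) = I_G(S\setminus\{u\};y)$: any two imply the third, and the non-membership conditions transfer routinely. Part (vi) then reduces to (iii) and (v): three edges $ux, uy, uz$ at $u$ all carry colour $u$ by (ii), and (v) produces edges $xy$ and $xz$ of colour $u$ between non-codewords, contradicting (iii).

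The main work will be (vii) and (viii), and I plan to derive both from one parity principle. Fixing a codeword $u$, attach to each vertex $w \in V(G_S)$ the Boolean value $\chi_u(w) := [\,u \in I_G(S;w)\,]$, with $\chi_u(\auxiliary) = 0$. An edge of colour $w' \neq u$ preserves $\chi_u$ because its endpoints' $I$-sets agree off $w'$ and $u \neq w'$, whereas an edge of colour $u$ flips $\chi_u$ by (iv). Traversing a cycle in $\graphminus{G_S}{S}$ must return $\chi_u$ to its start value for every $u$, so the cycle contains an even number of colour-$u$ edges for each appearing $u$; summing over colours, the total length is even, giving bipartiteness and completing (vii). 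For (viii), the same parity argument shows $I_G(S;w_1) = I_G(S;w_k)$ whenever every colour occurs an even number of times along the walk; LD then forces $w_1 = w_k$ when both are true vertices, and the endpoint case $w_1=\auxiliary$ (or $w_k=\auxiliary$) is handled by noting that $\auxiliary$ is the unique vertex of $\graphminus{G_S}{S}$ with empty $I$-set.

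The main obstacle I anticipate is not a deep one but the careful bookkeeping of $\auxiliary$ throughout: because $\auxiliary \notin V(G)$, several statements (notably (iv), (vii), and (viii)) branch into subcases depending on whether $\auxiliary$ appears among the vertices under consideration, and these must be handled via the convention $I_G(S';\auxiliary) = \emptyset$ rather than by invoking LD directly.
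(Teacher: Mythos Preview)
Your proposal is correct and follows essentially the same approach as the paper: parts (i)--(vi) are argued identically (with (iii) phrased via pigeonhole rather than symmetric-difference, but this is the same computation), and for (vii)--(viii) your indicator $\chi_u$ is exactly the paper's ``each edge adds or removes the codeword $u$ from the $I$-set'' argument written in Boolean notation. The careful handling of the auxiliary vertex $\auxiliary$ that you flag is likewise present in the paper's proof in the same places.
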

\begin{proof}
    (i) This claim trivially follows from the definition.
    
    (ii) This follows from the definition, because for any other codeword $w \in S$, the condition $u, v \notin (S \setminus \{w\})$ does not hold.
    
    (iii)--(iv) Assume that there is an edge $xy \in E(G_S)$ with the colour $u$, where $x, y \notin S$.
    Then the $I$-sets of $x$ and $y$ differ only by $u$.
    Without loss of generality, denote $I(x) = X \cup \{u\}$ and $I(y) = X$, where $u \notin X$ and $X \subseteq S$.
    By the definition of $I$-sets, $x \in N_G(u)$ and $y \notin N_G(u)$ if $y$ is a true vertex.
    If $y = \auxiliary$, then $X = \emptyset$.
    In that case $x \in N_G(u)$.
    This proves the claim~(iv).
    
    Suppose there is  a vertex $z \notin S$ with a $u$-coloured edge $zx$ in $G_S$. Its $I$-set differs from that of $x$ only by the vertex $u$, that is, $I(z) = X = I(y)$, a contradiction with $S$ being a locating-dominating code, as $y$ and $z$ are not codewords.
    If $y = \auxiliary$, then $I(z) = \emptyset$, also a contradiction.
    
    Similarly, if there is an edge $zy \in E(G_S)$ with the colour $u$, then $I(z) = I(y) \cup \{u\} = I(x)$, a contradiction.
    This proves the claim~(iii).

    (v) Any two of the $u$-coloured edges $ux$, $uy$ and  $xy$ in $E(G_S)$ imply two of the following equalities:
    \begin{align*}
        I(S\setminus\{u\}; u)&=I(S\setminus\{u\}; x) \\
        I(S\setminus\{u\}; u)&=I(S\setminus\{u\}; y) \\
        I(S\setminus\{u\}; x)&=I(S\setminus\{u\}; y),
    \end{align*}
    where $I(S \setminus \{u\};x) = \emptyset$ or $I(S \setminus \{u\};y) = \emptyset$ if $x = v_{-1}$ or $y = v_{-1}$, respectively.
    The third equality follows from transitivity and in turn, implies the existence of the corresponding edge in $G_S$ with the colour $u$.

    (vi) Suppose there are three edges $ux$, $uy$ and $uz$ in $G_S$.
    Due to~(i), we can assume that $x, y, z \notin S$. 
    Moreover, by~(ii), each edge has the colour $u$.
    By~(v), there are edges $xy$ and $xz$ in $E(G_S)$ with the colour $u$, contradicting~(iii).

    (vii) 
    Let $C =  w_1 w_2 \cdots w_k w_1$ be a cycle in $\graphminus{G_S}{S}$.
    Each edge $w_{i} w_{i+1}$ in $C$ corresponds to the removal (or addition) of a codeword $u$ from (or to) $I(w_i)$ to get $I(w_{i+1})$. 
    Each such removal (addition) has to be undone by adding (removing) the vertex $u$ somewhere along the cycle to get back to $I(w_i)$.
    Thus, each colour $u$ appears on the cycle $C$ an even number of times, and by the well-known result by K\H{o}nig,  $\graphminus{G_S}{S}$ is bipartite.
    
    (viii) Each appearance of a colour $u$ along the walk $W =  w_1 w_2 \cdots w_k$ adds or removes the codeword $u$ from the $I$-set of a vertex on the walk.
    Starting from $I(w_1)$, the additions and removals of codewords cancel each other out and we get $I(w_k) = I(w_1)$.
    If $I(w_1) = I(w_k)$, then $w_1 = w_k$ by $S$ being a locating-dominating code.
    Thus, $W$ is a closed walk.
\end{proof}

In the following definition, we introduce a weaker version of min-forced vertices in order to show a stronger result in Lemma~\ref{lemma:atleast2edges}.

\begin{definition}
    Let $G$ be a graph and let $S$ be a locating-dominating code in $G$. 
    The vertex $v \in S$ is \emph{non-swappable} with respect to $S$, if $\koodiswap{S}{v}{u}$ is not a locating-dominating code for any $u \neq v$.
\end{definition}

\begin{lemma}
\label{lemma:atleast2edges}
Let $G$ be a connected nontrivial graph and let $S$ be a minimal locating-dominating code in $G$.
If $v \in S$ is non-swappable  with respect to $S$, then there are at least 2 edges with the colour $v$ in the graph  $\graphminus{G_S}{S}$. 
\end{lemma}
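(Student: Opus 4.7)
The plan is to prove the contrapositive: if there is at most one colour-$v$ edge in $\graphminus{G_S}{S}$, then $v$ is swappable with respect to $S$. To begin, I would establish a clean swap criterion: for $u \in V(G) \setminus S$ with $u \neq v$, the code $\koodiswap{S}{v}{u}$ is locating-dominating if and only if, for every colour-$v$ edge $xy$ of $G_S$ with $u \notin \{x, y\}$, the vertex $u$ belongs to $N_G[x] \triangle N_G[y]$ (with the convention $N_G[\auxiliary] = \emptyset$). This reduction rests on the observation that the colour-$v$ edges of $G_S$ precisely record the pairs among $(V(G) \setminus S) \cup \{v, \auxiliary\}$ with equal (possibly empty) $I$-sets under $S' = S \setminus \{v\}$; since $u \notin S'$, any pair already separated under $S'$ stays separated once $u$ is added to $S'$.

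In Case 1, there is no colour-$v$ edge in $\graphminus{G_S}{S}$. Lemma~\ref{lemma:at-least-1-edge} still yields at least one colour-$v$ edge in $G_S$; by Lemma~\ref{lemma:G_S-properties}(vi), at most two colour-$v$ edges are incident to $v$, and two such edges would, via Lemma~\ref{lemma:G_S-properties}(v), force a colour-$v$ edge in $\graphminus{G_S}{S}$, contradicting the case. So there is a unique colour-$v$ edge $vc$ with $c \in (V(G) \setminus S) \cup \{\auxiliary\}$. If $c \in V(G) \setminus S$, the swap $\koodiswap{S}{v}{c}$ trivially meets the criterion because $c$ is an endpoint of the only colour-$v$ edge. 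If $c = \auxiliary$, then $I_G(S'; v) = \emptyset$, so $v$ has no codeword neighbours; since $G$ is connected and nontrivial, $v$ has some neighbour $u \in V(G) \setminus S$, and the swap $\koodiswap{S}{v}{u}$ resolves the edge $v\auxiliary$ because $u \in N_G[v]$.

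In Case 2, there is exactly one colour-$v$ edge $ab$ in $\graphminus{G_S}{S}$. Using Lemma~\ref{lemma:G_S-properties}(iv) (together with the convention $N_G[\auxiliary] = \emptyset$ when $b = \auxiliary$), I may assume that $a \in V(G) \setminus S$ is adjacent to $v$ while $b$ is not. Let $i_v$ denote the number of colour-$v$ edges incident to $v$; Lemma~\ref{lemma:G_S-properties}(vi) gives $i_v \le 2$, and Lemma~\ref{lemma:G_S-properties}(v) constrains the incident edges: if $i_v = 2$ they must be $va$ and $vb$, and if $i_v = 1$ the single incident edge $vc$ satisfies $c \notin \{a, b\}$ (otherwise (v) would push $i_v$ to $2$). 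I then check in each sub-case that at least one of the candidate swaps $\koodiswap{S}{v}{a}$, $\koodiswap{S}{v}{b}$, or (when $i_v = 1$) $\koodiswap{S}{v}{c}$ fulfils the criterion: the sub-case $i_v = 0$ is immediate via $\koodiswap{S}{v}{a}$; for $i_v = 2$, either $\koodiswap{S}{v}{a}$ works when $a$ and $b$ are non-adjacent in $G$ (then $a$ resolves the surviving edge $vb$), or $\koodiswap{S}{v}{b}$ works otherwise (then $b$ resolves $va$); for $i_v = 1$, one examines the four adjacency patterns of $c$ to $a, b$ and finds that in each pattern at least one of the three swaps resolves the remaining colour-$v$ edge (for example, if $c$ is adjacent to both $a$ and $b$, then $\koodiswap{S}{v}{b}$ resolves $vc$ since $b$ is adjacent to $c$ but not $v$).

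The main obstacle is the detailed sub-case analysis under $i_v = 1$: for each of the four patterns specifying whether $c$ is adjacent to $a$ and/or $b$, one must verify the resolution criterion by hand. The $b = \auxiliary$ variants are handled identically, with the convention $N_G[\auxiliary] = \emptyset$ simplifying the adjacency checks. Each verification is short and mechanical given the swap criterion established at the outset.
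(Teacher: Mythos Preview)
Your proof is correct and follows essentially the same contrapositive case analysis as the paper's proof; the paper cases on the total number of colour-$v$ edges in $G_S$ (zero, one, two, three or more) while you case on the number in $\graphminus{G_S}{S}$ and then on $i_v$, but the resulting sub-cases and the specific swaps chosen coincide. Your explicit swap criterion (resolving each surviving colour-$v$ edge via $N_G[x]\triangle N_G[y]$) is a clean repackaging of the ad hoc verifications the paper performs in each case, but it is not a genuinely different argument.
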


\begin{proof}
We shall show that a codeword $u$ with fewer than two edges of colour $u$ in $\graphminus{G_S}{S}$ can be swapped for another vertex while maintaining the locating-dominating properties of the code.
We analyse cases based on how many $u$-coloured edges there are in the entire colour graph $G_S$.
        \begin{enumerate}[(1)]\addtocounter{enumi}{-1}
        \item If there are no edges in $G_S$ with the colour $u$, then $S \setminus \{u\}$ is an LD-code in $G$, contradicting the minimality of $S$.
        
        \item Suppose then that there exists exactly one edge with the colour $u$ in $G_S$.
        If the only edge with the colour $u$ in $G_S$ is of type $xy$, where $x, y \notin S$ and $x \in N_G(u)$ (by Lemma~\ref{lemma:G_S-properties}(iv)), then $\koodiswap{S}{u}{x}$ is an LD-code.
        Indeed, the only pair of vertices that can violate the location-domination property in $S \setminus \{u\}$ is the pair $x$ and $y$.
        Therefore, the code  $\koodiswap{S}{u}{x}$  is locating-dominating since $x$ is now a codeword.
        This holds regardless of whether $y$ is the auxiliary vertex or a true vertex in $G$.
        It follows that $u$ is \emph{not} non-swappable.

        If the only edge is of type $ux$, where $x \notin S$ and $x \neq \auxiliary$, then the set $\koodiswap{S}{u}{x}$ is an LD-code.
        If $x = \auxiliary$, then let $w$ be any vertex in $N_G(u)$. (Indeed, such a vertex $w$ exists because there are no isolated vertices in $G$, and none of the neighbours of $u$ are in $S$ because $\auxiliary$ is a neighbour of $u$ in $G_S$.)
        The set $\koodiswap{S}{u}{w}$ is an LD-code and thus, the vertex $u$ is not non-swappable.

        \item

        Let there be exactly two edges with the colour $u$ in $G_S$.
        If all four endpoints of the two edges are in $\graphminus{G_S}{S}$, we are done; the vertex $u$ may or may not be non-swappable. 
        Now, let at least one of the edges be incident with a vertex in $S$, and by Lemma~\ref{lemma:G_S-properties}(ii), the codeword in $S$ must be $u$.

        If there are two edges $ux$ and $uy$ (or  $ux$ and $xy$) with the colour $u$, then by Lemma~\ref{lemma:G_S-properties}(v) there is a third edge $xy$ (respectively, $uy$) with the colour $u$, contradicting the assumption that there are exactly two such edges.
        
        Next, we consider the case where the edges are of type $xy$ and $uz$, where $x, y, z \notin S$ and $z \notin \{x, y\}$.
        Let one of $x$ or $y$ be the auxiliary vertex, say, $y = \auxiliary$.
        Now $I(S; x) = \{u\}$, and therefore $x \in N_G(u)$.
        If $x \notin N_G(z)$, then the code $\koodiswap{S}{u}{x}$ is still an LD-code, because $z$ is dominated by codewords other than $u$ (there is no edge $z\auxiliary = zy$ with the colour $u$ in $G_S$) and $x$ distinguishes $z$ and $u$.
        On the other hand, if $x \in N_G(z)$, then $\koodiswap{S}{u}{z}$ is an LD-code.
        Now $I(\koodiswap{S}{u}{z};x) = \{z\}$, which is a unique $I$-set with respect to $\koodiswap{S}{u}{z}$.
        The vertex $u$ is dominated by some codeword (there is no edge $u\auxiliary$ in $G_S$), and finally, $u$ is distinguished (notice that $z$ is now a codeword).

        Now we may assume that $x$ and $y$ are true vertices.
        By Lemma~\ref{lemma:G_S-properties}(iv), we may assume without loss of generality that $x \in N_G(u)$ and $y \notin N_G(u)$. 
        Let us first assume that $z \neq \auxiliary$. Since there are exactly two edges with the colour $u$ in $G_S$, we know  that all the pairs of vertices except $\{x,y\}$ and $\{u,z\}$ are distinguished and all vertices are dominated by codewords in $S\setminus \{u\}$. 
        There are three different cases depending on whether the vertex $z$ is adjacent to $x$ or $y$ in $G$:
        \begin{enumerate}
            \item If $\abs{N_G(z) \cap \{x, y\}} = 1$, then $\koodiswap{S}{u}{z}$  is an LD-code. Indeed, now the codeword $z$ distinguishes the pair $\{x,y\}$, and the pair $\{u,z\}$ does not need to be considered since $z$ is a codeword in the LD-code $\koodiswap{S}{u}{z}$. 
            \item If $N_G(z) \cap \{x, y\} = \{x, y\}$, then $\koodiswap{S}{u}{y}$ is an LD-code. Now the codeword $y$ distinguishes the pair $\{u,z\}$ and the pair $\{x,y\}$ does not need to be considered as $y\in \koodiswap{S}{u}{y}$.
            \item If $N_G(z) \cap \{x, y\} = \emptyset$, then $\koodiswap{S}{u}{x}$ is an LD-code. Now the codeword $x$ distinguishes the pair $\{u,z\}$ and the pair $\{x,y\}$ does not need to be considered as $x\in \koodiswap{S}{u}{x}$.
        \end{enumerate}
        If $z = \auxiliary$, then $\koodiswap{S}{u}{x}$ is an LD-code.

        \item Assume finally that there are at least three edges of colour $u$.
        By Lemma~\ref{lemma:G_S-properties}(ii), if a $u$-coloured edge is incident with a codeword, the codeword must be $u$.
        According to Lemma~\ref{lemma:G_S-properties}(vi), there cannot be three edges $ux$, $uy$ and $uz$.
        Thus, there must be at least one $u$-coloured edge in $\graphminus{G_S}{S}$.
        If two $u$-coloured edges have both their endpoints in $\graphminus{G_S}{S}$, we are done.
        Hence, we may assume that there is exactly one such edge.
        There are at least two other edges with the colour $u$, so they must be $ux$ and $uy$ for some $x, y \notin S$.
        By Lemma~\ref{lemma:G_S-properties}(v), the edge $xy$ is in $E(G_S)$ and it has the colour $u$.
        Therefore, the only $u$-coloured edge in $\graphminus{G_S}{S}$ is the edge $xy$.
        Lemma~\ref{lemma:G_S-properties}(iv) lets us assume that $x \in N_G(u)$.
        If $xy \in E(G)$, then $\koodiswap{S}{u}{y}$  is an LD-code.
        Otherwise $\koodiswap{S}{u}{x}$ is an LD-code, regardless of whether $y$ is a true vertex or not.
        In any case, the vertex $u$ is not non-swappable.

        Any other choice of three or more edges with the colour $u$ results in at least two edges with the colour $u$ between non-codewords in $G_S$.

    \end{enumerate}
    In conclusion, codewords with fewer than 2 edges in $\graphminus{G_S}{S}$ can be swapped for another vertex to get an LD-code, therefore, non-swappable vertices in a code $S$ have at least 2 edges in their colour in $\graphminus{G_S}{S}$.
\end{proof}

The converse of the claim in the previous lemma does not hold in general as will be seen next. There are graphs $G$ with locating-dominating codes $S$ such that there are ``swappable vertices'' that have at least two colour edges between non-codewords in $G_S$.
For example, the colour graph in Figure~\ref{fig:värigraafiesim}(b) has two edges with the colour $v_7$ between non-codewords, but the codeword $v_7$ is not non-swappable.
Indeed, the vertices $v_6$ and $v_7$ can be freely swapped in any LD-code containing one but not the other as they are twins, that is, they have the same closed neighbourhood in $G$.

\begin{corollary}
\label{lemma:atleast2edges-forced}
    Let $G$ be a connected nontrivial graph and let $S$ be a minimum locating-dominating code in $G$.
    If $v \in S$ is min-forced, then there are at least 2 edges with the colour $v$ in the graph $\graphminus{G_S}{S}$. 
\end{corollary}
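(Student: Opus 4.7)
The plan is to deduce this corollary directly from Lemma~\ref{lemma:atleast2edges} by showing that any min-forced codeword in a minimum LD-code is in particular non-swappable, and that every minimum LD-code is minimal.

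First I would observe that a minimum LD-code $S$ is automatically minimal: no proper subset of $S$ can be an LD-code, because such a subset would be an LD-code of strictly smaller cardinality, contradicting the minimality of $\gamma^{LD}(G) = |S|$. Hence Lemma~\ref{lemma:atleast2edges} will apply to $S$ once we have verified the non-swappability hypothesis.

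Next, I would prove the key implication: if $v \in S$ is min-forced, then $v$ is non-swappable with respect to $S$. Suppose for contradiction that $v$ is swappable, i.e., there exists some $u \neq v$ such that $S' = \koodiswap{S}{v}{u}$ is an LD-code in $G$. Then $|S'| = |S| = \gamma^{LD}(G)$, so $S'$ is itself a minimum LD-code in $G$. But $v \notin S'$, which directly contradicts the assumption that $v$ belongs to every minimum LD-code in $G$. Hence $v$ must be non-swappable with respect to $S$.

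Finally, combining these two observations, Lemma~\ref{lemma:atleast2edges} (applied to the minimum, hence minimal, LD-code $S$ and the non-swappable codeword $v$) yields that $\graphminus{G_S}{S}$ contains at least two edges of colour $v$, as claimed. I do not expect a real obstacle here; the corollary is essentially a repackaging of Lemma~\ref{lemma:atleast2edges} using the easy fact that min-forced is a strictly stronger condition than non-swappable once we have fixed a minimum LD-code.
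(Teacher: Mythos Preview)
Your proposal is correct and follows essentially the same line as the paper's proof: the paper also observes that minimum LD-codes are minimal, that min-forced vertices are non-swappable with respect to every minimum LD-code, and then invokes Lemma~\ref{lemma:atleast2edges}. (One tiny nitpick: in your contradiction step you assert $|S'|=|S|$, but if $u$ already lies in $S$ then $|S'|=|S|-1$; this still yields a contradiction, now with the minimality of $\gamma^{LD}(G)$, so the argument goes through unchanged.)
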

\begin{proof}
    Minimum LD-codes are minimal.
    Min-forced vertices are non-swappable with respect to every minimum LD-code in $G$.
    Therefore, by Lemma~\ref{lemma:atleast2edges}, there are at least 2 edges with the colour $v$ in the graph $\graphminus{G_S}{S}$  for each min-forced $v\in S$.
\end{proof}

Notice that our new (extended) colour graph always has at least two edges of the same colour in $\graphminus{G_S}{S}$ for every min-forced vertex.
This allows us to utilise a fruitful approach discussed in \cite{hernando2018locating}.
Although our graph $G_S$ is different from the one in \cite{hernando2018locating}, we can use, due to the similar properties in $G_S-S$ of our colour graph, the same reasoning as in Lemma 1 of \cite{hernando2018locating} to prove the analogous result below in Lemma~\ref{lemma:2_edges_cacti}.
Indeed, it is straightforward to check that the arguments to prove Lemma 1 in  \cite{hernando2018locating} relies on the properties (see Proposition 5 in \cite{hernando2018locating}) of their colour graph that hold in our colour graph as well
(due to (iii), (vii) and (viii) stated in Lemma~\ref{lemma:G_S-properties}).
For the following lemma, we define that a \emph{cactus} is a graph such that no two cycles share an edge. 

\begin{lemma}
    \label{lemma:2_edges_cacti}
    Let $S$ be a locating-dominating code in a nontrivial graph $G$ and let $S' \subseteq S$.
    Consider a subgraph $H$ of $\graphminus{G_S}{S}$ induced by a set of edges containing exactly two edges with colour $u$ for each $u \in S'$. Then all the connected components of $H$ are cacti.
\end{lemma}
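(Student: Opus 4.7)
The plan is to argue by contradiction, mirroring the proof of Lemma~1 in~\cite{hernando2018locating}. Suppose for contradiction that some connected component $K$ of $H$ is not a cactus. Then there exist two distinct simple cycles $C_1$ and $C_2$ in $K$ sharing at least one edge $e$ of some colour $u \in S'$.

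First, since $H \subseteq \graphminus{G_S}{S}$, Lemma~\ref{lemma:G_S-properties}(vii) implies that every simple cycle in $H$ has an even number of edges of each colour. Together with the hypothesis that $H$ contains exactly two edges of each colour in $S'$, every cycle in $H$ must contain either both or neither of the edges of each given colour. In particular, both $C_1$ and $C_2$ contain the two $u$-coloured edges of $H$; more generally, every colour lying in both colour sets $U_{C_1}$ and $U_{C_2}$ contributes both of its edges to the intersection $E(C_1) \cap E(C_2)$.

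Following the strategy of~\cite{hernando2018locating}, one then considers the symmetric difference $E(C_1) \triangle E(C_2)$, which is a non-empty even subgraph of $\graphminus{G_S}{S}$ and hence decomposes into edge-disjoint simple cycles. By appropriately concatenating portions of $C_1$ and $C_2$ (each split at the two shared $u$-edges), one constructs a walk in $\graphminus{G_S}{S}$ with no repeated edges and an even number of edges of each colour that fails to be closed, contradicting Lemma~\ref{lemma:G_S-properties}(viii); forced coincidences between same-coloured edges along this walk are ruled out by Lemma~\ref{lemma:G_S-properties}(iii).

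The main obstacle lies in the careful case analysis needed to build this walk, since the relative orientations of $C_1$ and $C_2$ with respect to the two $u$-edges admit several subcases. This analysis is carried out in~\cite{hernando2018locating} and transfers verbatim to our setting, because items (iii), (vii), and (viii) of Lemma~\ref{lemma:G_S-properties} replicate the structural properties of their $S$-associated graph. The auxiliary vertex $\auxiliary$ plays no special role in the argument, as it lies in $V(G_S) \setminus S$ and can thus appear in $V(H)$ just like any ordinary non-codeword vertex of $G$.
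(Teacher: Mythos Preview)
Your proposal is correct and takes essentially the same approach as the paper: both defer to the proof of Lemma~1 in~\cite{hernando2018locating}, observing that items~(iii), (vii), and~(viii) of Lemma~\ref{lemma:G_S-properties} supply exactly the structural properties of the colour graph needed for that argument to carry over, and that the auxiliary vertex requires no special treatment. Your write-up gives a somewhat more detailed sketch of the underlying contradiction argument than the paper does, but the substance and the ultimate appeal to~\cite{hernando2018locating} are the same.
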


To illustrate the previous lemma,
see Figure~\ref{fig:p5esim}(a) for an example of a graph with four min-forced vertices.
The cactus graph in Figure~\ref{fig:p5esim}(b) is the induced subgraph of the colour graph with two edges of each colour.

\begin{lemma}[Lemma 2 in \cite{hernando2018locating}]
    \label{lemma:cactus_bound}
    Let $H$ be a bipartite graph such that $\abs{V(H)} \geq 4$ and all its connected components are cacti. Then $\abs{V(H)} \geq \frac{3}{4} \abs{E(H)} + \cc(H)$, where $\cc(H)$ is the number of connected components in $H$.
\end{lemma}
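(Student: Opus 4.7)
The plan is to reduce the inequality to the single-component case by additivity and then induct on the order within each component. Since $|V(\cdot)|$ and $|E(\cdot)|$ are additive over connected components and $\cc(\cdot)$ counts them, it suffices to prove that every connected bipartite cactus $H'$ satisfies $|V(H')| \geq \frac{3}{4}|E(H')| + 1$; summing this over the $\cc(H)$ components of $H$ then yields the claimed inequality. The hypothesis $|V(H)| \geq 4$ plays no role in this algebraic reduction and is only inherited from the intended application.

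For a connected bipartite cactus $H'$ with $n' = |V(H')|$ and $m' = |E(H')|$, I would induct on $n'$. The base case $n' = 1$ is immediate: $1 \geq \tfrac{3}{4}\cdot 0 + 1$. For the inductive step, suppose first that $H'$ contains a vertex $v$ of degree $1$. Then $H' - v$ is a connected bipartite cactus on $n'-1$ vertices and $m'-1$ edges, and the inductive hypothesis yields
\[
n' \;\geq\; \tfrac{3}{4}(m'-1) + 1 + 1 \;=\; \tfrac{3}{4}m' + \tfrac{5}{4} \;\geq\; \tfrac{3}{4}m' + 1.
\]
Otherwise $H'$ has minimum degree at least $2$, so it contains no bridge, and hence every block of $H'$ is a cycle. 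Pick a leaf block $B$ of the block-cut tree of $H'$; since $H'$ is bipartite, $\ell := |V(B)| \geq 4$. Let $x$ be the unique cut vertex of $H'$ contained in $B$, or any vertex of $B$ if $H' = B$. Then $H'' := H' - (V(B) \setminus \{x\})$ is a connected bipartite cactus on $n' - (\ell-1)$ vertices and $m' - \ell$ edges, and the inductive hypothesis gives
\[
n' \;\geq\; \tfrac{3}{4}(m' - \ell) + 1 + (\ell - 1) \;=\; \tfrac{3}{4}m' + \tfrac{\ell}{4} \;\geq\; \tfrac{3}{4}m' + 1,
\]
where the last step uses $\ell \geq 4$.

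The main obstacle is the structural observation underpinning the second case: in a cactus with minimum degree at least $2$, every block is a cycle, and removing from a leaf cycle block all of its vertices except the attaching cut vertex still leaves a connected graph (and a cactus). This is a standard property of cacti via the block-cut tree, and once it is in hand the induction is routine. The hypothesis of bipartiteness enters only through the lower bound $\ell \geq 4$ on cycle lengths, which is also what makes the coefficient $\tfrac{3}{4}$ tight, as attained by a disjoint union of $4$-cycles.
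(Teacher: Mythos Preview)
The paper does not prove this lemma; it is quoted verbatim as Lemma~2 of \cite{hernando2018locating} and used as a black box, so there is no in-paper argument to compare your approach against.

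Your argument is essentially correct, with one slip in the justification: from minimum degree at least $2$ you cannot conclude that $H'$ contains no bridge (two $4$-cycles joined by a single edge is a bipartite cactus with minimum degree $2$ that does contain a bridge, and hence a block that is not a cycle). What you actually need, and what does follow, is that every \emph{leaf} block of the block--cut tree is a cycle: a leaf block that is a bridge would contain a non-cut vertex, and such a vertex would have degree $1$ in $H'$. Since you only ever pick a leaf block $B$, this weaker claim suffices and the rest of the induction goes through unchanged. Your remark that the hypothesis $\abs{V(H)} \geq 4$ is not actually needed for the inequality is also correct.
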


The graph in Figure~\ref{fig:p5esim}(b) reaches the bound of Lemma~\ref{lemma:cactus_bound}. The next theorem gives us upper bounds for the number of min-forced vertices. In particular, it shows that the maximum ratio between the number of min-forced vertices and the order of a nontrivial graph is $2/5$, which is also shown to be optimal in Example~\ref{Ex_upper_bound_attained}.

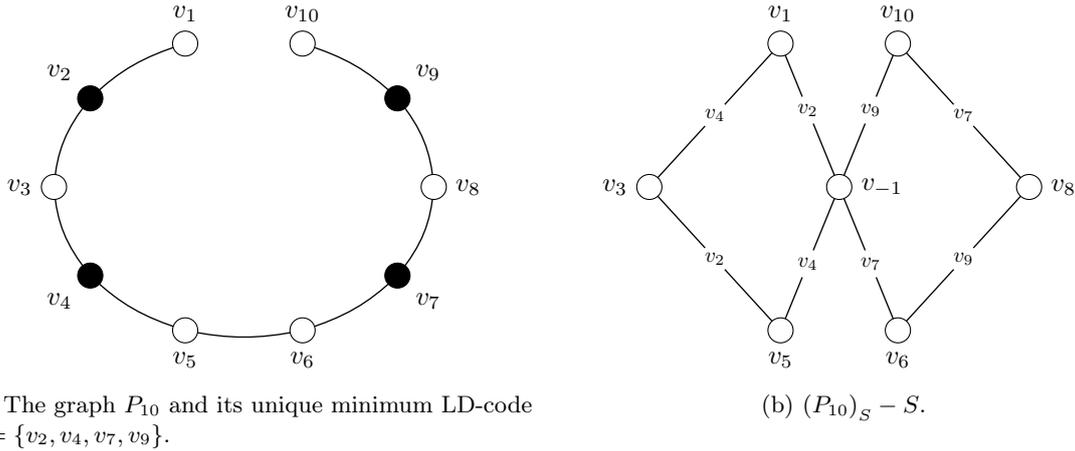
\begin{figure}
     \centering
     \begin{subfigure}[t]{0.47\textwidth}
        \centering
        \begin{tikzpicture}[yscale=2, xscale=2.5, rotate=108]

    \renewcommand*{\EdgeLineWidth}{ 0.5pt}
    \SetVertexMath
    \SetVertexLabelOut

    {
    \SetVertexNoLabel
    \tikzstyle{VertexStyle}=[minimum size=1pt,inner sep=0pt]
    \Vertices{circle}{1,2,3,4,5,6,7,8,9,0} 
    }

    \tikzstyle{EdgeStyle}=[bend right=14]
    \Edge(1)(2)
    \Edge(2)(3)
    \Edge(3)(4)
    \Edge(4)(5)
    \Edge(5)(6)
    \Edge(6)(7)
    \Edge(7)(8)
    \Edge(8)(9)
    \Edge(9)(0)

    \tikzstyle{VertexStyle}=[circle,draw, fill=black]
    \SO[unit=0, Lpos=-45+180](2){v_2}
    \SO[unit=0, Lpos=45+180](4){v_4}
    \SO[unit=0, Lpos=135+180](7){v_7}
    \SO[unit=0, Lpos=-135+180](9){v_9}

    \tikzstyle{VertexStyle}=[circle,draw, fill=white]
    \SO[unit=0, Lpos=-90+180](1){v_1}
    \SO[unit=0, Lpos=0+180](3){v_3}
    \SO[unit=0, Lpos=90+180](5){v_5}
    \SO[unit=0, Lpos=90+180](6){v_6}
    \SO[unit=0, Lpos=180+180](8){v_8}
    \SO[unit=0, Lpos=-90+180](0){v_{10}}

\end{tikzpicture}
        \caption{The graph $P_{10}$ and its unique minimum LD-code $S =\{v_2, v_4, v_7, v_9\}$.}
     \end{subfigure}
     \hfill
     \begin{subfigure}[t]{0.47\textwidth}
        \centering
        \begin{tikzpicture}[yscale=2, xscale=2.5, rotate=108]

    \renewcommand*{\EdgeLineWidth}{ 0.5pt}
    \SetVertexMath
    \SetVertexLabelOut

    {
    \SetVertexNoLabel
    \tikzstyle{VertexStyle}=[minimum size=1pt,inner sep=0pt]
    \Vertices{circle}{1,2,3,4,5,6,7,8,9,0} 
    }

    \tikzstyle{VertexStyle}=[circle,draw, fill=white]
    \Vertex[x=0, y=0]{\auxiliary}

    \tikzstyle{LabelStyle}=[fill=white, circle,minimum size=12pt,inner sep=1pt,scale=.8]

    \tikzstyle{EdgeStyle}=[black]
    \Edge[label=$v_2$](\auxiliary)(1)
    \Edge[label=$v_4$](1)(3)
    \Edge[label=$v_2$](3)(5)
    \Edge[label=$v_4$](5)(\auxiliary)
    \Edge[label=$v_7$](\auxiliary)(6)
    \Edge[label=$v_9$](6)(8)
    \Edge[label=$v_7$](8)(0)
    \Edge[label=$v_9$](0)(\auxiliary)

    \tikzstyle{VertexStyle}=[circle,draw, fill=white]
    \SO[unit=0, Lpos=90](1){v_1}
    \SO[unit=0, Lpos=180](3){v_3}
    \SO[unit=0, Lpos=-90](5){v_5}
    \SO[unit=0, Lpos=-90](6){v_6}
    \SO[unit=0, Lpos=0](8){v_8}
    \SO[unit=0, Lpos=90](0){v_{10}}

\end{tikzpicture}
        \caption{$\graphminus{{(P_{10})}_S}{S}$.}
     \end{subfigure}

    \caption{The illustration using the path $P_{10}$.}
    \label{fig:p5esim}
\end{figure}

\begin{theorem}\label{theo:forced-40-bound}
    If $G$ is a connected nontrivial graph with $k \geq 1$ min-forced vertices, then \[k \leq \frac{2}{3}\left(n -\gamma^{LD}(G)\right)\] and, in particular, $k \leq \frac{2}{5}n$. Furthermore, for all nontrivial graphs $G$ with min-forced vertices, we have $\gamma^{LD}(G) \leq n-3$.
\end{theorem}
\begin{proof}
Let $S \subseteq V(G)$ be a minimum locating-dominating code in a graph $G$ with $k \geq 1$ min-forced vertices.
Denote the non-codewords $U = V(G) \setminus S$. 
By Lemma~\ref{lemma:atleast2edges-forced}, there are at least $2$ edges with the colour $v$ in $G_S[U \cup \{\auxiliary\}] = G_S - S$ for each min-forced vertex $v \in S$.
Let $E' \subseteq E(G_S - S)$ be a set of edges consisting of exactly $2$ edges of colour $v$ for each min-forced $v$.
Since $k \geq 1$ and the edges with the same colour do not share an endpoint by Lemma~\ref{lemma:G_S-properties}(iii), there are at least 4 vertices in $G_S[E']$.
Having 4 vertices in  $G_S[E']$ implies that $\gamma^{LD}(G) \leq (n+1) - 4 = n-3$.
By Lemmas~\ref{lemma:2_edges_cacti} and \ref{lemma:G_S-properties}(vii), the connected components of $G_S[E']$ are bipartite cacti. 
Therefore, by Lemma~\ref{lemma:cactus_bound}, we have $\abs{U} + 1 \geq \abs{V(G_S[E'])}  \geq \frac{3}{4} \abs{E(G_S[E'])} + \cc(G_S[E']) \geq \frac{3}{2}k + 1$. 
Then, using $k \leq \gamma^{LD}(G)$ we get
\begin{align*}
     \frac{3}{2} k \leq \abs{U} =n - \gamma^{LD}(G) \leq n - k,
\end{align*}
from which we get the bounds $k \leq \frac{2}{3}\left(n -\gamma^{LD}(G)\right)$ and  $k \leq \frac{2}{5}n$.
\end{proof}

In what follows, we show that the upper bound $k \leq \frac{2}{3}\left(n -\gamma^{LD}(G)\right)$ can be attained for all even positive integers $k$. For this purpose, we need some auxiliary results. We first recall the minimum cardinality of LD-codes in paths, which is known due to Slater \cite{slater1987domination}.
\begin{lemma}[\cite{slater1987domination}]
\label{lemma:path-bound}
    We have $\gamma^{LD}(P_n) = \ceil{\frac{2n}{5}}$.
\end{lemma}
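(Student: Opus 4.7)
The plan is to establish matching upper and lower bounds of $\lceil 2n/5 \rceil$. For the upper bound, I would give an explicit construction based on the periodic pattern of placing codewords at positions $v_{5i+2}$ and $v_{5i+4}$ inside each block of five consecutive vertices. Within a single block the three non-codewords acquire the pairwise distinct identifying sets $\{v_{5i+2}\}$, $\{v_{5i+2}, v_{5i+4}\}$, $\{v_{5i+4}\}$, and the singleton sets $\{v_{5i+4}\}$ and $\{v_{5i+7}\}$ that appear at junctions of consecutive blocks involve different codewords and so never coincide. For paths whose length is not a multiple of five I would append or shift one codeword near the right endpoint and check each residue $n \bmod 5 \in \{1,2,3,4\}$ by hand; in every case the resulting code has exactly $\lceil 2n/5 \rceil$ elements.

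For the lower bound I would use a discharging argument. Let $S$ be any LD-code in $P_n$ and assign every vertex initial charge $1$, so the total charge is $n$. Each non-codeword $v$ then ships charge $1/|I(v)|$ to every codeword in its identifying set. Since $P_n$ has maximum degree $2$, every non-codeword $v$ satisfies $|I(v)| \in \{1,2\}$, and each codeword receives charge only from its (at most two) non-codeword neighbours. The key observation is that a codeword $c$ cannot have two non-codeword neighbours $u_1,u_2$ both with $|I(u_i)|=1$, since that would force $I(u_1)=I(u_2)=\{c\}$ and violate the locating-dominating property. Hence at most one of $c$'s contributors provides a full unit while the other provides at most $1/2$, so $c$ ends with charge at most $1 + 1 + \tfrac{1}{2} = \tfrac{5}{2}$. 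Summing charges yields
\[
n \le \tfrac{5}{2}\,|S|,
\]
whence $|S| \ge \lceil 2n/5 \rceil$ because $|S|$ is an integer.

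The main obstacle is the lower bound; the upper bound is a construction with a routine boundary check. The crux is spotting the structural restriction that prevents two singleton-identified non-codewords from sharing their codeword neighbour, as this is precisely what pushes the per-codeword charge cap down from $3$ to $\tfrac{5}{2}$ and makes the factor $2/5$ tight.
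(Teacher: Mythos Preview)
The paper does not prove this lemma at all; it is quoted as a known result from Slater~\cite{slater1987domination} and used without argument. So there is no in-paper proof to compare your proposal against.

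That said, your proposal is correct and self-contained. The construction for the upper bound is the standard periodic pattern, and the boundary adjustments for $n\not\equiv 0\pmod 5$ are routine exactly as you describe. Your discharging argument for the lower bound is clean: the crucial observation that a codeword cannot have two non-codeword neighbours each identified by a singleton (since both singletons would equal $\{c\}$) is precisely what caps the final charge of every codeword at $\tfrac{5}{2}$, and the inequality $n\le \tfrac{5}{2}\,|S|$ follows. This is one of the standard ways to obtain the $2n/5$ lower bound; an alternative is a direct block-counting argument showing that any five consecutive vertices must contain at least two codewords, but your discharging version is arguably tidier because it avoids edge-of-block bookkeeping.
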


Furthermore, we define a \emph{weakened} version of LD-codes in paths, which turns out to be useful also in Section~\ref{pathsection}.
\begin{definition}
    A code $S \subseteq V(P_n)$ is an \emph{LD*-code}, if for all distinct non-codewords $v_i, v_j \in V(P_n) \setminus (S \cup \{v_n\})$ their $I$-sets are nonempty and
    \(I(v_i) \neq I(v_j)\).
    In other words, $S$ is a code for which the locating-dominating property is satisfied for all but the last vertex $v_n$.  The cardinality of a minimum LD*-code in $P_n$ is denoted by $\gamma^{LD^{*}}(P_n)$.
\end{definition}

The cardinality $\gamma^{LD^{*}}(P_n)$ is determined in the following theorem.
\begin{theorem}\label{theo:LD*opt}
    We have $\gamma^{LD^{*}}(P_n) = \ceil{\frac{2(n-1)}{5}}$.
\end{theorem}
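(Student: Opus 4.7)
The plan is to prove the equality $\gamma^{LD^{*}}(P_n) = \gamma^{LD}(P_{n-1})$, which combined with Lemma \ref{lemma:path-bound} gives the claimed formula $\ceil{2(n-1)/5}$. The intuition is that the extra vertex $v_n$ in the LD*-definition plays almost no role: as a non-codeword it is exempt from being identified, and as a codeword it is only useful to identify $v_{n-1}$, which we can code directly. So LD*-codes in $P_n$ should correspond, up to cardinality, to LD-codes in $P_{n-1}$.

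For the upper bound $\gamma^{LD^{*}}(P_n) \le \gamma^{LD}(P_{n-1})$, I would take a minimum LD-code $C$ of $P_{n-1}$ and view it as a subset of $V(P_n)$. Since $v_n \notin C$, for each non-codeword $v_i \in V(P_{n-1}) \setminus C$ the set $I_{P_n}(C; v_i)$ coincides with $I_{P_{n-1}}(C; v_i)$: the only possible difference in the neighbourhoods is $v_n$ (for $i = n-1$), which is absent from $C$. Hence $C$ is an LD*-code of $P_n$.

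For the lower bound $\gamma^{LD^{*}}(P_n) \ge \gamma^{LD}(P_{n-1})$, starting from a minimum LD*-code $S$ of $P_n$ I would construct an LD-code $T \subseteq V(P_{n-1})$ with $|T| \le |S|$. Set $T = S$ if $v_n \notin S$, and $T = (S \setminus \{v_n\}) \cup \{v_{n-1}\}$ if $v_n \in S$; in both cases $|T| \le |S|$. If $v_n \notin S$, the argument mirrors the upper bound: the $I$-sets are unchanged in passing to $P_{n-1}$, so $T$ is LD.

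The main obstacle will be verifying that $T$ is LD in $P_{n-1}$ when $v_n \in S$, and particularly in the subcase $v_{n-1} \notin S$. For non-codewords $v_i$ of $T$ with $i \le n-3$, both $v_n$ and $v_{n-1}$ lie outside $N_{P_n}[v_i]$, so $I_{P_{n-1}}(T; v_i) = I_{P_n}(S; v_i)$, and non-emptiness together with mutual distinctness is inherited from $S$. The delicate case is $v_{n-2}$ when $v_{n-2} \notin S$: the LD*-property of $S$ forces $v_{n-3} \in S$ so that $I_{P_n}(S; v_{n-2}) = \{v_{n-3}\}$ is non-empty, and then $I_{P_{n-1}}(T; v_{n-2}) = \{v_{n-3}, v_{n-1}\}$ contains $v_{n-1}$, which distinguishes it from every other $I_{P_{n-1}}(T; v_i)$ with $i \le n-3$ (those cannot contain $v_{n-1}$ since $v_{n-1} \notin N_{P_{n-1}}[v_i]$). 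A handful of small values of $n$ where $v_{n-3}$ does not exist can be checked directly. Combining the two bounds yields $\gamma^{LD^{*}}(P_n) = \gamma^{LD}(P_{n-1}) = \ceil{2(n-1)/5}$.
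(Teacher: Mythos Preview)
Your proposal is correct and follows exactly the paper's approach: both prove $\gamma^{LD^{*}}(P_n) = \gamma^{LD}(P_{n-1})$ by observing that an LD-code of $P_{n-1}$ is an LD*-code of $P_n$, and that from any LD*-code $S$ of $P_n$ one obtains an LD-code of $P_{n-1}$ of no larger size via $T = S$ when $v_n \notin S$ and $T = (S\setminus\{v_n\})\cup\{v_{n-1}\}$ when $v_n \in S$. The paper simply asserts that this $T$ is locating-dominating, whereas you supply the verification; note that your detour through $v_{n-3}\in S$ is not actually needed, since $v_{n-1}\in T$ already makes $I_{P_{n-1}}(T;v_{n-2})$ non-empty and distinguishes it from all $I$-sets with index at most $n-3$.
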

\begin{proof}
    Since all LD-codes in $P_{n-1}$ are LD*-codes in $P_n$, we have $\gamma^{LD^{*}}(P_n) \leq \gamma^{LD}(P_{n-1}) = \ceil{\frac{2(n-1)}{5}}$. 
    
    Let $S^*$ be an LD*-code in $P_n$.
    If $v_n \notin S^*$, then $S^*$ is an LD-code in $P_{n-1}$.
    If $v_n \in S^*$, then $S^*[v_n \leftarrow v_{n-1}]$ is an LD-code in $P_{n-1}$ with the same or smaller cardinality as $S^*$.
    It follows that $\gamma^{LD^{*}}(P_n) \geq \gamma^{LD}(P_{n-1}) = \ceil{\frac{2(n-1)}{5}}$.
\end{proof}

Now we are ready to present in the following example a family of graphs attaining the upper bound $k \leq \frac{2}{3}\left(n -\gamma^{LD}(G)\right)$ for all even positive integers $k$. It should be noted that the presented graphs $G_{s,t}$ are trees and, hence, their min-forced vertices could be determined based on the characterization given by Blidia and Lounes~\cite{Blidia2009minforcedtrees}. However, as their characterization of min-forced vertices in trees is somewhat technical, we have decided to present a simple independent proof based on Theorem~\ref{theo:forced-characterization}.

\begin{example} \label{Ex_upper_bound_attained}
    Assume that $s$ and $t$ are positive integers. 
    Let $G_{s,t}$ be a graph formed by a path $P_s$, the vertices of which are denoted by $v_1, v_2, \ldots, v_s$, and pendant vertices $u_1, u_2, \ldots, u_t$, which are attached to $v_s$; occasionally, $G_{s,t}$ has been called the \emph{broom graph} (due to an apparent visual resemblance). 
    In what follows, we first give a lower bound on $\gamma^{LD}(G_{s,t})$. 
    For this purpose, let $s \geq 4$ and $S$ be an LD-code in $G_{s,t}$. 
    Now we can make the following observations:
	\begin{itemize}
            \item If $v_s \notin S$, then the vertices $u_1, u_2, \ldots, u_t$ belong to $S$ since otherwise $I(u_i) = \emptyset$ for some $i \in \{1, \dots, t\}$, and furthermore, $\{v_{s-1}, v_{s-2}\} \cap S \neq \emptyset$, since otherwise $I(v_{s-1}) = \emptyset$.
  
            \item If $v_s \in S$, then at least $t-1$ of the (open) twins $u_i$ belong to $S$, say $\{u_1, u_2, \ldots, u_{t-1}\} \subseteq S$. Furthermore, if $u_t \notin S$, then $v_{s-1} \in S$ or $v_{s-2} \in S$ as otherwise $I(v_{s-1}) = I(u_{t})$.
	\end{itemize}
	Thus, in conclusion, $\abs{S \cap \{v_{s-2}, v_{s-1}, v_{s}, u_1, u_2, \ldots, u_t\}} \geq t+1$. Moreover, $S \cap \{v_1, v_2, \ldots, v_{s-3}\}$ is an LD*-code in $P_{s-3}$. Hence, we have
	\begin{equation} \label{Eq_lower_bound_on_Gst}
		\gamma^{LD}(G_{s,t}) \geq \gamma^{LD^*}(P_{s-3}) + t + 1 \text.
	\end{equation}
	
	Consider then the graph $G_{5m+4,t}$ for a nonnegative integer $m$. It is straightforward to verify that the set $\{v_{5\ell+2} \mid \ell = 0, 1, \ldots, m\} \cup \{v_{5\ell+4} \mid \ell = 0, 1, \ldots, m\} \cup \{u_1, u_2, \ldots, u_{t-1}\}$ with $2m + t + 1$ vertices is locating-dominating in $G_{5m+4,t}$. Therefore, by~\eqref{Eq_lower_bound_on_Gst} and Theorem~\ref{theo:LD*opt}, we have $\gamma^{LD}(G_{5m+4,t}) = 2m + t + 1$. In what follows, we show that the vertices of $\{v_{5\ell+2} \mid \ell = 0, 1, \ldots, m\} \cup \{v_{5\ell+4} \mid \ell = 0, 1, \ldots, m\}$ are min-forced in $G_{5m+4,t}$:
	\begin{itemize}
		\item For $\ell = 0,1, \ldots, m-1$, the vertex $v_{5\ell+2}$ is min-forced by Theorem~\ref{theo:forced-characterization}(ii) since
		\[
		\begin{split}
			\gamma^{LD}(G_{5m+4,t} - v_{5\ell+2}) &= \gamma^{LD}(P_{5\ell+1}) + \gamma^{LD}(G_{5m+4-(5\ell+2),t}) \\
            &\geq \left\lceil \frac{2(5\ell+1)}{5} \right\rceil + \gamma^{LD^*}(P_{5(m-\ell)-1}) + t + 1 \\
            &= 2\ell + \left\lceil \frac{2(5(m-\ell)-2)}{5} \right\rceil  + t + 2 \\
            &= 2\ell + 2(m-\ell) + t + 2 \\
            &> \gamma^{LD}(G_{5m+4,t}) \text,
		\end{split}
		\]
        where we apply Theorem~\ref{theo:LD*opt} to compute $\gamma^{LD^*}(P_{5(m-\ell)-1})$.
		
		\item Similarly, for $\ell = 0,1, \ldots, m-1$, the vertex $v_{5\ell+4}$ can be shown to be min-forced by Theorem~\ref{theo:forced-characterization}(ii).
		
		\item The vertex $v_{5m+2}$ is min-forced by Theorem~\ref{theo:forced-characterization}(ii) since 
		\[
        \begin{split}
            \gamma^{LD}(G_{5m+4,t} - v_{5m+2}) &= \gamma^{LD}(P_{5m+1}) + \gamma^{LD}(K_{1,t+1}) = \left\lceil \frac{2(5m+1)}{5} \right\rceil + t + 1 \\ &= 2m + t + 2 > \gamma^{LD}(G_{5m+4,t}) \text.
        \end{split}
		\]
		Indeed, it is well-known that for the star $K_{1,t+1}$, we have $\gamma^{LD}(K_{1,t+1}) = t + 1$.
		
		\item The vertex $v_{5m+4}$ is min-forced by Theorem~\ref{theo:forced-characterization}(ii) since $G_{5m+4,t} - v_{5m+4}$ consists of a path $P_{5m+3}$ and $t$ isolated vertices implying that 
		\[
		\gamma^{LD}(G_{5m+4,t} - v_{5m+4}) = \left\lceil \frac{2(5m+3)}{5} \right\rceil + t = 2m + t + 2 \text.
		\]
	\end{itemize}
	
	Thus, in conclusion, we have $n = \abs{V(G_{5m+4,t})} = 5m + t + 4$, $\gamma^{LD}(G_{5m+4,t}) = 2m + t + 1$ and there are $2m+2$ min-forced vertices in $G_{5m+4,t}$. Hence, the graph $G_{5m+4,t}$ attains the upper bound of Theorem~\ref{theo:forced-40-bound} as
	\[
	\frac{2}{3}\left( n - \gamma^{LD}(G_{5m+4,t}) \right) = \frac{2}{3}\left( (5m + t + 4) - (2m + t + 1) \right) = 2m + 2\text.
	\]
\end{example}
Notice that if $t = 1$ in the previous example, then $G_{5m+4,1} = P_{5(m+1)}$ and the number of min-forced vertices is equal to $\gamma^{LD}(P_{5(m+1)}) = 2 \cdot 5(m+1) /5 = 2m+2$. In other words, the minimum locating-dominating code in $P_{5(m+1)}$ is unique. 
The number of minimum LD-codes in paths is further discussed in the following section in Theorem~\ref{theo:number-of-codes}, where another approach is given for the uniqueness of the code in $P_{5(m+1)}$. 
In conclusion, Theorem~\ref{theo:forced-40-bound} and Example~\ref{Ex_upper_bound_attained} are combined in the following corollary.
\begin{corollary} \label{Cor_max_number_of_min-forced}
    If $G$ is a connected nontrivial graph with $k \geq 1$ min-forced vertices, then \[k \leq \frac{2}{3}\left(n -\gamma^{LD}(G)\right) \quad \text{and}\quad k \leq \frac{2}{5}n.\] Moreover, there exist graphs for which the equalities hold. In particular, the maximum ratio between the number of min-forced vertices and the order of a connected nontrivial graph is $2/5$.
\end{corollary}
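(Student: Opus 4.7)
The plan is to assemble the corollary directly from the two results that immediately precede it, so there is essentially no new work: the upper bounds $k \leq \frac{2}{3}(n - \gamma^{LD}(G))$ and $k \leq \frac{2}{5}n$ have already been established in Theorem~\ref{theo:forced-40-bound}, so the only content to verify is that the equalities in these inequalities, and in particular the ratio $2/5$, are actually achieved.

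For the tightness of $k \leq \frac{2}{3}(n - \gamma^{LD}(G))$, I would cite the broom family $G_{5m+4,t}$ analysed in Example~\ref{Ex_upper_bound_attained}. There it is shown, for every nonnegative integer $m$ and every positive integer $t$, that $n = 5m + t + 4$, $\gamma^{LD}(G_{5m+4,t}) = 2m + t + 1$, and the number of min-forced vertices is $k = 2m+2$. A direct computation gives
\[
\frac{2}{3}\bigl( n - \gamma^{LD}(G_{5m+4,t}) \bigr) = \frac{2}{3}\bigl( (5m+t+4) - (2m+t+1) \bigr) = 2m+2 = k,
\]
so the first bound is attained for every even positive $k$.

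For the tightness of the ratio $k/n \leq 2/5$, I would specialise to $t = 1$ in the same family. As noted in the remark just after Example~\ref{Ex_upper_bound_attained}, $G_{5m+4,1} = P_{5(m+1)}$, so the order is $n = 5(m+1)$ while $k = 2m+2$, giving
\[
\frac{k}{n} = \frac{2m+2}{5(m+1)} = \frac{2}{5}.
\]
Thus the ratio $2/5$ is achieved by an infinite family of paths, which together with the upper bound from Theorem~\ref{theo:forced-40-bound} establishes that $2/5$ is exactly the maximum ratio over connected nontrivial graphs.

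Since both bounds are quoted directly from Theorem~\ref{theo:forced-40-bound} and both witnesses come from Example~\ref{Ex_upper_bound_attained}, there is no real obstacle in the proof. If anything, the only small care point is checking that the specialisation $t = 1$ really does produce $P_{5(m+1)}$ and hence the ratio $2/5$, rather than merely the weaker first bound; this is an immediate inspection of the construction.
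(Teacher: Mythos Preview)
Your proposal is correct and matches the paper's approach exactly: the paper presents this corollary without a separate proof, simply stating that it combines Theorem~\ref{theo:forced-40-bound} with Example~\ref{Ex_upper_bound_attained}, and your write-up spells out precisely that combination, including the specialisation $t=1$ to obtain the $2/5$ ratio via $P_{5(m+1)}$.
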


\section{The number of different minimum locating-dominating codes in paths}\label{pathsection}

In the previous section, we noticed that the LD-code formed by the min-forced vertices is unique in $P_{5k}$. 
A natural extension of this observation is the question of how many minimum locating-dominating codes there are in the path $P_n$ in general.
To address this question, 
we denote the number of minimum locating-dominating codes in $P_n$ by $\koodit(n)$ and the number of LD*-codes in $P_n$ with cardinality $k$ by  $\apu(n, \koko)$.
In the following theorem, we calculate the number of different minimum locating-dominating codes for all
paths, that is, we determine $\koodit(n)$ for lengths $n \geq 5$.
For completeness, we mention that the following first values of $C(n)$ are straightforward to verify: 
\begin{align*}
    \koodit(1) = 1,\ 
    \koodit(2) = 2,\ 
    \koodit(3) = 3,\ \text{and }
    \koodit(4) = 4.
\end{align*}

\begin{theorem}\label{theo:number-of-codes}
    The number of minimum locating-dominating codes in the path $P_{5\m + r}$ for each $r \in \{0, \dots, 4\}$ and $m \geq 1$ are the following: 

        \begin{itemize}
        \item[($M_0$)] 
        $\koodit(5\m) = 1$
        \item[($M_1$)]  
        $\koodit(5\m + 1) = \frac{1}{6}\m^3 + 2\m^2 + \frac{29}{6}\m + 1$  

        \item[($M_2$)]  
        $\koodit(5\m + 2) = \m + 2$
        \item[($M_3$)]  

        $\koodit(5\m + 3) = \frac{1}{24}\m^4 + \frac{11}{12}\m^3 + \frac{131}{24}\m^2 + \frac{103}{12}\m + 3$
        \item[($M_4$)]  
        $\koodit(5\m + 4)= \frac{1}{2}\m^2 + \frac{7}{2}\m + 4$.

    \end{itemize}
\end{theorem}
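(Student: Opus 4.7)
The plan is to prove the theorem by strong induction on $n$. In each residue class $n \equiv k \pmod 5$ I would set up a recursion relating $C(n)$ to $C$- and $A$-values at smaller arguments, obtained by decomposing any minimum LD-code of $P_n$ according to its behaviour on the first few vertices. The auxiliary quantity $A(n,k)$ introduced earlier is tailored exactly for this decomposition: if the ``initial block'' of the code fails to dominate the first vertex of the residual path, then the residual piece is most naturally an LD*-code rather than an LD-code, and Theorem~\ref{theo:LD*opt} supplies its minimum cardinality.

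First, I would fix $k_n := \gamma^{LD}(P_n) = \lceil 2n/5 \rceil$ from Lemma~\ref{lemma:path-bound} and verify the base cases $C(1), \dots, C(9)$ by direct enumeration (in particular checking $C(5) = 1$, $C(6)$, $C(7) = 3$, and so on, which match the claimed formulas at $m = 1$). Then I would classify, by a short case analysis on which of $v_1, v_2, v_3$ are codewords, the possible patterns on the prefix $v_1, v_2, v_3, v_4$ of any minimum LD-code $S$; one shows that $S \cap \{v_1, v_2, v_3, v_4\}$ must be one of a small handful of two-element sets, namely $\{v_2, v_4\}$, $\{v_1, v_3\}$, $\{v_1, v_4\}$, $\{v_2, v_3\}$, or $\{v_1, v_2\}$, with the behaviour at $v_5$ then forced by propagation of the distinguishing constraint. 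For each such pattern I would identify the smallest prefix length $j$ after which the code ``restarts'', and exhibit a bijection between the remaining codeword choices in $\{v_{j+1}, \dots, v_n\}$ and either the minimum LD-codes of a path of length $n-j$, or the minimum LD*-codes of a path of length $n-j+1$, depending on whether the initial pattern already dominates and distinguishes $v_{j+1}$. Summing over patterns yields a recursion of the form
\[
    C(n) \;=\; C(n-5) \;+\; \sum_{i} c_i \, A\!\bigl(n - j_i,\, k_n - 2\bigr),
\]
with explicit constants $c_i$ and shifts $j_i \in \{5,6,7\}$, together with a parallel recursion for $A(n,k)$ obtained by the analogous initial-pattern analysis for LD*-codes (where the relaxed condition at the last vertex enters only in one modified terminal case).

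Finally, within each residue class $n = 5m + k$, the shifts in the recursion are multiples of $5$ plus a fixed constant, so the solution in $m$ is a polynomial whose coefficients can be extracted by iterating the recursion and using the standard identities $\sum_{j=0}^{m}\binom{j}{r} = \binom{m+1}{r+1}$; this yields precisely the five closed forms asserted by the theorem. The main obstacle will be Step~2, the structural case analysis: one must check that the short list of initial patterns really does cover every minimum LD-code, that distinct patterns produce disjoint families, and above all that the correct choice of $C$ versus $A$ is made for each pattern (i.e.\ whether the first vertex of the residual path is already dominated by a codeword of the initial block). Once the recursions are correctly stated and the LD/LD*-distinction is handled, the polynomial coefficients $\frac{1}{6}$, $\frac{1}{24}$, $\frac{1}{2}$, etc., fall out by routine induction on $m$ together with the base values computed at the start.
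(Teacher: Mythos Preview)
Your overall strategy---classify minimum LD-codes by the codeword pattern at one end of the path, express the count via the LD*-counts $A(\cdot,\cdot)$ on shorter paths, set up a parallel recursion for $A$, and solve within each residue class modulo~$5$---is exactly the paper's approach, up to the harmless symmetry of working from the prefix rather than the suffix. The paper classifies by only three suffix patterns and obtains the compact identity
\[
C(n)\;=\;A(n-1,k_n-1)\;+\;A(n-3,k_n-2)\;+\;A(n-4,k_n-2),
\]
together with the four-term recursion $A(n,k)=A(n-1,k-1)+A(n-2,k-1)+A(n-4,k-2)+A(n-5,k-2)$; all closed forms then follow by iterating one step in~$m$ at a time.

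Where your sketch diverges is in the precise shape of the recursion. Your five-pattern classification on $\{v_1,\dots,v_4\}$ is valid (and your implicit claim that exactly two of the first four vertices lie in every minimum code is correct), but the behaviour at $v_5$ is \emph{not} ``forced by propagation'' in four of the five patterns: for instance with $S\cap\{v_1,\dots,v_4\}=\{v_2,v_4\}$ both $v_5\in S$ and $v_5\notin S$ are possible, and the two sub-cases feed into $A(n-5,k_n-3)$ and $C(n-5)$ respectively, not into a single term. Consequently the recursion cannot take the uniform shape $C(n)=C(n-5)+\sum_i c_i\,A(n-j_i,k_n-2)$ with fixed shifts $j_i\in\{5,6,7\}$; several terms carry second argument $k_n-3$ rather than $k_n-2$, and the shifts run over a wider range. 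The machinery still works once you track these correctly, but it is messier than necessary. If you mirror the paper's three minimal suffix cases from the left---$v_1\in S$; $v_1\notin S,\,v_2,v_3\in S$; $v_1,v_3\notin S,\,v_2,v_4\in S$---you recover exactly the three-term formula above and the computation becomes as short as the paper's.
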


\begin{proof}
Let $S$ be an LD-code in $P_n$, where $n \geq 5$.
Anticipating the recursion formula in Equation~\eqref{eq:koodien-lkm},
the code $S$ is classified into the following three categories based on the pattern of codewords in the final few vertices of the path:

\begin{itemize}
    \item  $v_n \in S$,
    
    \item  $v_n\notin S$ and $v_{n-1}, v_{n-2} \in S$, or
    
    \item $v_n, v_{n-2} \notin S$ and $v_{n-1}, v_{n-3} \in S$. 
\end{itemize}
These patterns are illustrated in Figure~\ref{fig:polun-loppu}.
In what follows, we discuss the  minimum LD-codes in each category.
\begin{itemize}
    \item Suppose first that $v_n \in S$.
    Clearly, the set $S \cap \{v_1, \dots, v_{n-1}\}$ is an LD*-code in $P_{n-1}$ with cardinality $\gamma^{LD}(P_n)-1$. On the other hand, if $A$ is an LD*-code in $P_{n-1}$ with cardinality $\gamma^{LD}(P_n)-1$, then $A\cup\{v_n\}$ is a minimum LD-code in $P_n$. This is clear if $v_{n-1}\in A$, so assume that $v_{n-1}\notin A$. Now the vertex $v_{n-1}$ is dominated and has a unique $I$-set due to $v_n$. Moreover, each $A$ gives a different LD-code for $P_n.$
    
    \item If $v_n \notin S$, then necessarily $v_{n-1} \in S$. Now either $v_{n-2} \in S$ or $v_{n-2} \notin S$.
    \begin{itemize}
        \item Let us first suppose that $v_{n-2} \in S$. Now the set $S \cap \{v_1, \dots, v_{n-3}\}$ is an LD*-code in $P_{n-3}$ with cardinality $\gamma^{LD}(P_n)-2$. Conversely, 
        if $A$ is an LD*-code in $P_{n-3}$ with cardinality $\gamma^{LD}(P_n)-2$, then $A\cup\{v_{n-2},v_{n-1}\}$ is a minimum LD-code in $P_n$. Indeed if $v_{n-3}\in A$, then we are immediately done, and otherwise  ($v_{n-3}\notin A$) the vertex $v_{n-3}$ is dominated by $v_{n-2}$ and its $I$-set contains $v_{n-2}$, which no other non-codeword does.
        
        \item Suppose next that $v_{n-2} \notin S$. To distinguish the non-codewords $v_n$ and $v_{n-2}$, we need to have $v_{n-3} \in S$. The set $S \cap \{v_1, \dots, v_{n-4}\}$ is an LD*-code in $P_{n-4}$ with cardinality $\gamma^{LD}(P_n)-2$. On the other hand, if $A$ is an LD*-code in $P_{n-4}$ with cardinality $\gamma^{LD}(P_n)-2$, then $A\cup\{v_{n-3},v_{n-1}\}$ is a minimum LD-code in $P_n$, since (if $v_{n-4}\notin A$) the vertex $v_{n-4}$  is dominated by $v_{n-3}$ and distinguished from other $I$-sets of $v_i$ ($i\le n-5)$ by $v_{n-3}$.

    \end{itemize}
\end{itemize}

As seen above, the three categories described above partition the set of minimum LD-codes in $P_n$, and hence each LD-code is counted exactly once by counting the number of appropriate LD*-codes in shorter paths.
Thus, recalling Lemma~\ref{lemma:path-bound}, we get the following formula for the number of minimum LD-codes in $P_n$ (based on the patterns illustrated in Figure~\ref{fig:polun-loppu}): 
\begin{equation}
\label{eq:koodien-lkm}
\begin{split}
    \koodit(n) &= \apu(n - 1, \ceil{{2n}/{5}} - 1)\\
            &+ \apu(n - 3, \ceil{{2n}/{5}} - 2)\\
            &+ \apu(n - 4, \ceil{{2n}/{5}} - 2).
\end{split}     
\end{equation}

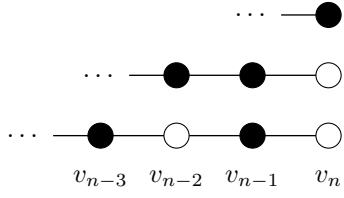
\begin{figure}
    \centering
    \begin{tikzpicture}[yscale=0.8, xscale=1, rotate=180]

    \renewcommand*{\EdgeLineWidth}{ 0.5pt}
    \SetVertexMath
    \SetVertexLabelOut

    {
    \SetVertexNoLabel

    \tikzstyle{VertexStyle}=[circle,draw, fill=black]
    \Vertex[x=0, y=0]{00}

    \Vertex[x=1, y=1]{11}
    \Vertex[x=2, y=1]{21}

    \tikzstyle{VertexStyle}=[circle,draw, fill=white]

    \Vertex[x=0, y=1]{01}

    }

    \tikzstyle{VertexStyle}=[circle,draw, fill=black]

    \Vertex[x=3, y=2, L=v_{n-3}, Lpos=-90, Ldist=5]{32}
    \Vertex[x=1, y=2, L=v_{n-1}, Lpos=-90, Ldist=5]{12}
    
    \tikzstyle{VertexStyle}=[circle,draw, fill=white]
    
    \Vertex[x=0, y=2, L=v_{n}, Lpos=-90, Ldist=5]{02}
    \Vertex[x=2, y=2, L=v_{n-2}, Lpos=-90, Ldist=5]{22}

{
    \tikzstyle{VertexStyle}=[fill=white,circle]
    \SetVertexLabelIn

    \Vertex[x=4, y=2, L=\dots]{42}
    \Vertex[x=3, y=1, L=\dots]{31}
    \Vertex[x=1, y=0, L=\dots]{10}

}

    \tikzstyle{EdgeStyle}=[bend right=0]
    \Edge(00)(10)
    
    \Edge(01)(11)
    \Edge(11)(21)
    \Edge(21)(31)
    
    \Edge(02)(12)
    \Edge(12)(22)
    \Edge(22)(32)
    \Edge(32)(42)

\end{tikzpicture}
    \caption{The different categories of LD-codes illustrated. The black and white vertices represent the codewords and non-codewords, respectively.
    }
    \label{fig:polun-loppu}
\end{figure}

For the values of $\apu(n,\koko)$, we get the following recurrence relation:
\begin{equation}\label{eq:apukoodien-lkm}
    \begin{split}
        \apu(n, k) &= \apu(n - 1, \koko - 1)\\
            &+ \apu(n - 2, \koko - 1)\\
            &+ \apu(n - 4, \koko - 2) \\
            &+ \apu(n - 5, \koko - 2).
    \end{split}
\end{equation}

We now reason why the recurrence relation holds. 
Each LD*-code $S$ in $P_n$ (where $n \geq 6$) with $\abs{S} = \koko$ is, as will be seen, in exactly one of the following four categories, illustrated in Figure~\ref{fig:polun-loppu*}:
\begin{enumerate}[(i)]
    \item  $v_n \in S$,
    
    \item  $v_n\notin S$ and $v_{n-1}\in S$,
    
    \item $v_n, v_{n-1} \notin S$ and $v_{n-2},v_{n-3} \in S$, or
    
    \item $v_n, v_{n-1}, v_{n-3} \notin S$ and $v_{n-2},v_{n-4} \in S$. 
\end{enumerate}

Next, we consider each type of LD*-code. First we notice that $S\cap \{v_1, \dots, v_{n-j}\} $ is an LD*-code of cardinality $\koko-1$ in $P_{n-j}$ where $j=1$ in the category (i) and  $j=2$ in the category (ii). Similarly, $S\cap \{v_1, \dots, v_{n-j}\} $ is an LD*-code of cardinality $\koko-2$ in $P_{n-j}$ where $j=4$ in the category (iii) and $j=5$ in the category (iv). Next we consider different endings of LD*-codes in $P_n$ and show  that a suitable LD*-code in a shorter path provides an LD*-code in $P_n$.

\begin{itemize}
    \item Let first $v_n \in S$.  If $A$ is an LD*-code with cardinality $\koko-1$ in $P_{n-1}$, then $A\cup\{v_n\}$ is an LD*-code of cardinality $k$ in $P_n$ as the $I$-set of $v_{n-1}$ is nonempty and unique (the reasoning is needed only if $v_{n-1}\notin A$).
    
    \item Suppose then that $v_n \notin S$ and $v_{n-1} \in S$. If $A$ is an LD*-code with cardinality $\koko-1$ in $P_{n-2}$, then $A\cup\{v_{n-1}\}$ is an LD*-code of cardinality $k$ in $P_n$. Indeed, if $v_{n-2}\in A$, we are done, and if $v_{n-2}\notin A$, the set $I(v_{n-2})$ contains at least $v_{n-1}$, so it is nonempty and necessarily distinct from $I$-sets other than possibly $I(v_n)$, which is allowed in an LD*-code of $P_n$. 
    
    \item If $v_n, v_{n-1} \notin S$, then we must have $v_{n-2} \in S$ in order for $v_{n-1}$ to be dominated. (Recall that the final vertex need not be dominated in an LD*-code.) Then, in order to have distinct $I$-sets for the vertices $v_i \notin S$ where $i \in \{n-3, n-1\}$, either $v_{n-3} \in S$ or $v_{n-3} \notin S$ and $v_{n-4} \in S$.
    \begin{itemize}
        \item Let first $v_{n-3} \in S$.  If $A$ is an LD*-code with cardinality $\koko-2$ in $P_{n-4}$, then $A\cup\{v_{n-3},v_{n-2}\}$ is an LD*-code of cardinality $k$ in $P_n$, since (if $v_{n-4}\notin A)$ the codeword $v_{n-3}$ dominates and distinguishes $v_{n-4}$.
        \item Finally, let $v_{n-3} \notin S$ and $v_{n-4} \in S$. If $A$ is an LD*-code with cardinality $\koko-2$ in $P_{n-5}$, then $A\cup\{v_{n-4},v_{n-2}\}$ is an LD*-code of cardinality $k$ in $P_n$, since  (if $v_{n-5}\notin A$) the vertex $v_{n-5}$ is dominated and distinguished by $v_{n-4}$.
    \end{itemize}
\end{itemize}
As we see above, the four categories (i)--(iv) are the only possible patterns of codewords in an LD*-code at the end of the path $P_n$, and they are mutually exclusive, so each LD*-code in $P_n$ with cardinality $k$ is counted exactly once in the recursion formula~\eqref{eq:apukoodien-lkm}.

Theorem~\ref{theo:LD*opt} gives the boundary condition
\begin{equation} \label{eq:LD*bound}
    \apu(n, \koko) = 0, \text{ when } \koko <  \ceil{\frac{2(n-1)}{5}}.
\end{equation}
    
\begin{figure}
    \centering
        \begin{tikzpicture}[yscale=0.8, xscale=1, rotate=180]

    \renewcommand*{\EdgeLineWidth}{ 0.5pt}
    \SetVertexMath
    \SetVertexLabelOut

    {
    \SetVertexNoLabel

    \tikzstyle{VertexStyle}=[circle,draw, fill=black]
    \Vertex[x=0, y=0]{00}

    \Vertex[x=1, y=1]{11}

    \Vertex[x=2, y=2]{22}
    \Vertex[x=3, y=2]{32}
    
    \tikzstyle{VertexStyle}=[circle,draw, fill=white]
    
    \Vertex[x=0, y=2]{02}
    \Vertex[x=1, y=2]{12}
    \Vertex[x=0, y=1]{01}

    }

    \tikzstyle{VertexStyle}=[circle,draw, fill=black]

    \Vertex[x=4, y=3, L=v_{n-4}, Lpos=-90, Ldist=5]{43}
    \Vertex[x=2, y=3, L=v_{n-2}, Lpos=-90, Ldist=5]{23}
    
    \tikzstyle{VertexStyle}=[circle,draw, fill=white]
    
    \Vertex[x=0, y=3, L=v_{n}, Lpos=-90, Ldist=5]{03}
    \Vertex[x=1, y=3, L=v_{n-1}, Lpos=-90, Ldist=5]{13}
    \Vertex[x=3, y=3, L=v_{n-3}, Lpos=-90, Ldist=5]{33}

{
    \tikzstyle{VertexStyle}=[fill=white,circle]
    \SetVertexLabelIn
    
    \Vertex[x=5, y=3, L=\dots]{53}
    \Vertex[x=4, y=2, L=\dots]{42}
    \Vertex[x=2, y=1, L=\dots]{21}
    \Vertex[x=1, y=0, L=\dots]{10}

}

    \tikzstyle{EdgeStyle}=[bend right=0]
    \Edge(00)(10)
    
    \Edge(01)(11)
    \Edge(11)(21)
    
    \Edge(02)(12)
    \Edge(12)(22)
    \Edge(22)(32)
    \Edge(32)(42)
    \Edge(03)(13)    
    \Edge(13)(23)
    \Edge(23)(33)
    \Edge(33)(43)
    \Edge(43)(53)

\end{tikzpicture}
    \caption{The four categories of LD*-codes illustrated.}
    \label{fig:polun-loppu*}
\end{figure}
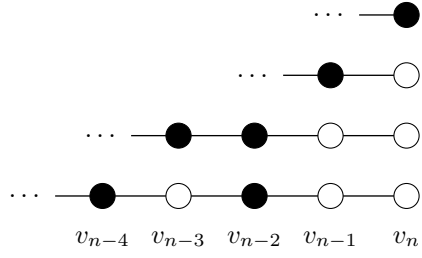

We will first prove the following values of $\apu(n, \koko)$ in this order and refer back to them as needed:
\begin{equation} \label{eq:5k+1}
     \apu(5\K +1, 2\K) = 1,
\end{equation}
\begin{equation} \label{eq:5k+3}
    \apu(5\K +3, 2\K+1) = \K + 2,
\end{equation}
\begin{equation}\label{eq:5k}
        \apu(5\K, 2\K) =  \frac{1}{2}\K^2 + \frac{5}{2}\K + 1,
\end{equation}
\begin{equation}\label{eq:5k+2}
    \apu(5\K+2, 2\K+1) =  \frac{1}{6}\K^3 + 2\K^2 + \frac{29}{6}\K + 2,
\end{equation}
\begin{equation}\label{eq:5k+4}
        \apu(5\K + 4, 2\K + 2) =  \frac{1}{24}\K^4 + \frac{11}{12}\K^3 + \frac{131}{24}\K^2 + \frac{115}{12}\K + 5.
\end{equation}

In what follows, we prove Equations~\eqref{eq:5k+1}--\eqref{eq:5k+4}.

\noindent {\bf Equation~\eqref{eq:5k+1}}: First, we calculate $\apu(5\m + 1, 2\m)=1$.
By~\eqref{eq:apukoodien-lkm},
\begin{align*}
    \apu(5\m +1, 2\m ) &= \apu(5\m +1 - 1, 2\m - 1) 
            + \apu(5\m +1 - 2, 2\m - 1)\\
            &+ \apu(5\m +1 - 4, 2\m - 2) 
            + \apu(5\m +1 - 5, 2\m - 2), 
\end{align*}
where the first three terms are eliminated by the bound~\eqref{eq:LD*bound}, 
because \(2\m - 1 < \ceil{\frac{2(5\m-1)}{5}}\), \(2\m - 1 < \ceil{\frac{2(5\m-1-1)}{5}}\), and \(2\m - 2 < \ceil{\frac{2(5\m-3-1)}{5}}\).
We are left with
$$\apu(5\m +1, 2\m) = \apu(5(\m-1) +1, 2(\m - 1)),$$
where through iteration we get the base case $\apu(1,0)$, that is, the number of LD*-codes in $P_1$ with no vertices.
There is one such code, namely $\emptyset$.
Therefore,
\begin{equation*}
     \apu(5\m +1, 2\m) = \apu(1, 0) = 1.
\end{equation*}

\noindent {\bf Equation~\eqref{eq:5k+3}}: Next, we calculate $\apu(5\m +3, 2\m+1) = \K + 2$.
By~\eqref{eq:apukoodien-lkm},
\begin{align*}
    \apu(5\m +3, 2\m + 1) &= \apu(5\m +3 - 1, 2\m + 1 - 1)
            + \apu(5\m +3 - 2, 2\m + 1 - 1)\\
            &+ \apu(5\m +3 - 4, 2\m + 1 - 2)
            + \apu(5\m +3 - 5, 2\m + 1 - 2), 
\end{align*}
where the first and third terms are eliminated by~\eqref{eq:LD*bound} and the value of the second term is known by Equation~\eqref{eq:5k+1}.
We are left with the recurrence relation
$$\apu(5\m +3, 2\m +1) = 1 + \apu(5(\m-1) +3, 2(\m - 1)+1),$$
where after $\m$ iterations we arrive at $\apu(3,1)$, the number of LD*-codes in $P_3$ with one vertex.
There are two such codes, namely $\{v_1\}$ and $\{v_2\}$.
Thus, we obtain
\begin{equation*}
    \apu(5\K +3, 2\K+1) = \K\cdot1 + \apu(3,1) = \K + 2.
\end{equation*}

\noindent {\bf Equation~\eqref{eq:5k}}: Let us prove that $\apu(5\m, 2\m) = \frac{1}{2}\K^2 + \frac{5}{2}\K + 1$.
By~\eqref{eq:apukoodien-lkm},
\begin{align*}
    \apu(5\m, 2\m) &= \apu(5\m - 1, 2\m - 1) 
            + \apu(5\m - 2, 2\m - 1)\\
            &+ \apu(5\m - 4, 2\m - 2) 
            + \apu(5\m - 5, 2\m - 2), 
\end{align*}
where the first term is eliminated by the bound~\eqref{eq:LD*bound} and the values of the second and third terms are known by Equations~\eqref{eq:5k+3} and \eqref{eq:5k+1}.
We are left with the recurrence relation
$$\apu(5\m, 2\m) = (\m + 1) +  1 + \apu(5(\m-1), 2(\m - 1)),$$
where, after $m-1$ iterations, we arrive at $\apu(5,2)$, the number of LD*-codes in $P_5$ with two elements.
There are 4 such codes: $\{v_1, v_3\}$, $\{v_1, v_4\}$, $\{v_2, v_3\}$ and $\{v_2, v_4\}$.
Thus, we obtain
\begin{equation*}
\begin{split}
        \apu(5\K, 2\K) &= (\K + 1) +  1 + \apu(5(\K-1), 2(\K - 1)) \\
        &= \sum_{i = 2}^{\K}( i + 2) + \apu(5,2) \\
        &= \frac{1}{2}\K^2 + \frac{5}{2}\K + 1.
\end{split}
\end{equation*}

\noindent {\bf Equation~\eqref{eq:5k+2}}: Let us show that $\apu(5\m+2, 2\m+1) = \frac{1}{6}\K^3 + 2\K^2 + \frac{29}{6}\K + 2$.
By~\eqref{eq:apukoodien-lkm},
\begin{align*}
    \apu(5\m+2, 2\m+1) &= \apu(5\m +2 - 1, 2\m + 1 - 1) 
            + \apu(5\m +2 - 2, 2\m + 1 - 1)\\
            &+ \apu(5\m +2 - 4, 2\m + 1 - 2) 
            + \apu(5\m +2 - 5, 2\m + 1 - 2), 
\end{align*}
where the first three terms are known by Equations~\eqref{eq:5k+1}, \eqref{eq:5k} and \eqref{eq:5k+3}. 
We get the recurrence relation
$$\apu(5\m+2, 2\m+1) =  1+ \left(\frac{1}{2}\K^2 + \frac{5}{2}\K + 1 \right)  + (\K + 1)  + \apu(5(\K-1)+2, 2(\K - 1)+1),$$
where after $\m$ iterations we get the base case $\apu(2,1)$, the number of one-vertex LD*-codes in $P_2$. 
There are two such codes, namely $\{v_1\}$ and $\{v_2\}$.
Thus, we obtain
\begin{equation*}
\begin{split}
    \apu(5\K+2, 2\K+1)
    & =\frac{1}{2}\K^2 + \frac{7}{2}\K + 3 + \apu(5(\K-1)+2, 2(\K - 1)+1) \\
    &= \sum_{i=1}^{\K}\left(\frac{1}{2}i^2 + \frac{7}{2}i + 3 \right) + \apu(2,1) \\
    &=  \frac{1}{6}\K^3 + 2\K^2 + \frac{29}{6}\K + 2.
\end{split}
\end{equation*}

\noindent {\bf Equation~\eqref{eq:5k+4}}: Let us verify that $\apu(5\m + 4, 2\m + 2) = \frac{1}{24}\K^4 + \frac{11}{12}\K^3 + \frac{131}{24}\K^2 + \frac{115}{12}\K + 5$.
By~\eqref{eq:apukoodien-lkm},
\begin{align*}
    \apu(5\m + 4, 2\m + 2) &= \apu(5\m + 4 - 1, 2\m + 2 - 1) 
            + \apu(5\m + 4 - 2, 2\m + 2 - 1)\\
            &+ \apu(5\m + 4 - 4, 2\m + 2 - 2) 
            + \apu(5\m + 4 - 5, 2\m + 2 - 2), 
\end{align*}
where the first three terms are known by Equations~\eqref{eq:5k+3}, \eqref{eq:5k+2} and \eqref{eq:5k}. 
We get the recurrence relation
\begin{multline*}
    \apu(5\m+4, 2\m+2) =  (\K+2) +  \left(\frac{1}{6}\K^3 + 2\K^2 + \frac{29}{6}\K + 2\right) +\left(\frac{1}{2}\K^2 + \frac{5}{2}\K + 1\right) \\ + \apu(5(\K-1)+4, 2(\K - 1)+2)
\end{multline*}
where after $m$ iterations we get $\apu(4,2)$, the number of two-vertex LD*-codes in $P_4$. 
There are 5 such codes, namely all two-vertex subsets of $V(P_4)$ except $\{v_3, v_4\}$.
Thus, we obtain
\begin{equation*}
\begin{split}
        \apu(5\K + 4, 2\K + 2)
        &= \frac{1}{6}\K^3 + \frac{5}{2}\K^2 + \frac{25}{3}\K + 5 + \apu(5(\K-1)+4, 2(\K - 1)+2)\\
        &= \sum_{i=1}^{\K}\left(\frac{1}{6}i^3 + \frac{5}{2}i^2 + \frac{25}{3}i + 5\right) + \apu(4,2) \\
        &=  \frac{1}{24}\K^4 + \frac{11}{12}\K^3 + \frac{131}{24}\K^2 + \frac{115}{12}\K + 5.
\end{split}
\end{equation*}

%
%
Now we  have solved all the necessary values of $\apu(n, \koko)$, so let us 
calculate $\koodit(n)$,  the number of different minimum locating-dominating codes in $P_{n}$ for all $n\ge 5$ . In what follows, Cases~$M_0$--$M_4$ of the claim are shown.

\noindent {\bf Case~$M_0$}: To warm up, we calculate $C(5m)$ with our new method (although we already know from previous section that $C(5m)=1$). Now
Equation~\eqref{eq:koodien-lkm} gives us
\begin{align*}
    \koodit(5\m) = \apu(5\m - 1, 2\m- 1)
            + \apu(5\m - 3, 2\m - 2) 
            + \apu(5\m - 4, 2\m - 2). 
\end{align*}
The first two terms are eliminated by the bound~\eqref{eq:LD*bound}. 
We apply Equation~\eqref{eq:5k+1} and obtain that $\koodit(5\m) = \apu(5\m - 4, 2\m - 2) = 1$. 


\noindent {\bf Case~$M_1$}: Next, we calculate $\koodit(5\m+1)$.
Equation~\eqref{eq:koodien-lkm} gives us
\begin{align*}
    \koodit(5\m+1) = \apu(5\m, 2\m)
            + \apu(5\m -2, 2\m-1) 
            + \apu(5\m -3, 2\m-1) . 
\end{align*}
Now we respectively apply Equations~\eqref{eq:5k}, \eqref{eq:5k+3} and \eqref{eq:5k+2}, and we obtain
\begin{align*}
    \koodit(5\m+1) &= \left(\frac{1}{2}\m^2 + \frac{5}{2}\m + 1\right)
            + (\m - 1 + 2)
            + \left(\frac{1}{6}(\K-1)^3 + 2(\K-1)^2 + \frac{29}{6}(\K-1) + 2\right) \\ 
            &= \frac{1}{6}\m^3 + 2\m^2 + \frac{29}{6}\m + 1.
\end{align*}


\noindent {\bf Case~$M_2$}: Let us next calculate $\koodit(5\m+2)$.
Equation~\eqref{eq:koodien-lkm} gives us
\begin{align*}
    \koodit(5\m+2) 
            = \apu(5\m +1, 2\m)
            + \apu(5\m - 1, 2\m - 1) 
            + \apu(5\m - 2, 2\m - 1) . 
\end{align*}
After eliminating the middle term by \eqref{eq:LD*bound} 
and applying Equations~\eqref{eq:5k+1} and \eqref{eq:5k+3} to the other two terms, we get
\begin{equation*}
    \koodit(5\m+2) = 1
                    +(\m - 1 + 2) = \m+2.
\end{equation*}


\noindent {\bf Case~$M_3$}: Let us calculate $\koodit(5\m + 3)$.
Equation~\eqref{eq:koodien-lkm} gives us
\begin{align*}
    \koodit(5\m+3) = \apu(5\m +2, 2\m+1)
            + \apu(5\m, 2\m)
            + \apu(5\m -1, 2\m). 
\end{align*}
By applying Equations~\eqref{eq:5k+2}, \eqref{eq:5k} and \eqref{eq:5k+4}, we obtain
\begin{align*}
    \koodit(5\m+3) &= 
            \frac{1}{24}\m^4 + \frac{11}{12}\m^3 + \frac{131}{24}\m^2 + \frac{103}{12}\m + 3.
\end{align*}


\noindent {\bf Case~$M_4$}: Finally, let us compute $\koodit(5\m+4)$.
Equation~\eqref{eq:koodien-lkm} gives us
\begin{align*}
    \koodit(5\m+4) = \apu(5\m + 3, 2\m + 1)
            + \apu(5\m + 1, 2\m)
            + \apu(5\m, 2\m). 
\end{align*}
We apply Equations~\eqref{eq:5k+3}, \eqref{eq:5k+1} and \eqref{eq:5k} to get 
\vspace{\abovedisplayskip}

\hfill\(\displaystyle 
    \koodit(5\m+4) = 
    \frac{1}{2}\m^2 + \frac{7}{2}\m + 4.
\)
\end{proof}

\section{Computational complexity of determining min-forced and min-void vertices}\label{sec:complexity}

In this section, we show that the problems ``given a graph $G$ and a vertex $u \in V(G)$, is $u$ min-forced?'' and ``given a graph $G$ and a vertex $u \in V(G)$, is $u$ min-void?'' are co-NP-hard problems. 
Similar questions have been studied in the context of resolving sets and basis forced vertices in \cite{Hakanen_2022}.

We use the following notation for the 3-satisfiability problem (3-SAT).
Let $X = \{x_1, \dots, x_n\}$ be the set of variables and 
$\literals = \{x_1, \dots, x_n, \overline{x}_1, \dots, \overline{x}_n\}$ the set of literals, where $\overline{x}$ is the negation of $x$.
Let $\instance = \{ \clause_1, \clause_2, \cdots, \clause_m\}$ be a 3-SAT instance over $X$, where each clause $\clause_j \subseteq \literals$ contains exactly three literals.
An assignment of truth values is a set $A \subseteq U$ where either $x_i \in A$ or $\overline{x}_i \in A$ for each $x_i \in X$.
The logical interpretation of these definitions is that $\instance$ corresponds to the formula $\clause_1 \wedge \clause_2 \wedge \cdots \wedge \clause_m$ and the clause $\clause_j = \{u_{j, 1},u_{j, 2},u_{j, 3}\} \subseteq \literals$ corresponds to the formula $u_{j, 1} \vee u_{j, 2} \vee u_{j, 3}$.
In a truth assignment $\assignment$, the variable $x_i$ is set as \true if $x_i \in \assignment$ and \false if $\overline{x}_i \in \assignment$.
The truth assignment $\assignment$ satisfies the instance $\instance$ if each clause $\clause_j$ contain at least one true literal, in other words, $\clause_j \cap \assignment \neq \emptyset$. 

The proof presented here is inspired by the proof of Lemma 3.1 in \cite{Charon2003}.

\begin{theorem}\label{theo:complexity}
Let $G$ be a graph and $u \in V(G)$.
\begin{enumerate}[(i)]
    \item Deciding whether $u$ is a min-forced vertex of $G$ is a co-NP-hard problem.
    \item Deciding whether $u$ is a min-void vertex of $G$ is a co-NP-hard problem.
\end{enumerate}
\end{theorem}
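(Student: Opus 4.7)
The plan is to prove co-NP-hardness by a polynomial-time reduction from 3-SAT (following the Charon et al.\ style cited just before the theorem). For part (i), given a 3-SAT instance $\instance = \{\clause_1, \dots, \clause_\m\}$ over variables $x_1, \dots, x_n$ with literal set $\literals$, I will construct in polynomial time a graph $G(\instance)$ together with a distinguished vertex $u \in V(G(\instance))$ and a target integer $N = N(n,\m)$ such that the following three statements hold simultaneously: $\gamma^{LD}(G(\instance)) = N$; $u$ lies in every minimum LD-code iff $\instance$ is unsatisfiable; and equivalently, $u$ can be omitted from some minimum LD-code iff $\instance$ is satisfiable. Part (ii) will be handled by a dual modification of the same gadget in which the role of $u$ is reversed, so that $u$ lies in some minimum LD-code iff $\instance$ is satisfiable.

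For the construction I would use three gadget families. First, a \emph{variable gadget} $H_i$ for each $x_i$: two literal vertices labelled $x_i$ and $\overline{x}_i$ together with a small attachment of pendants or twin vertices that forces every LD-code to contain at least one of $\{x_i, \overline{x}_i\}$ and, in a minimum LD-code, exactly one; this selection defines an assignment $\assignment$ with $x_i \in \assignment \iff x_i \in S$. Second, a \emph{clause gadget} $K_j$ whose central vertex $c_j$ is adjacent to the three literal vertices of $\clause_j$ (plus possibly local private-neighbour attachments), engineered so that $c_j$ and its local attachments are correctly dominated and distinguished by codewords already present in the variable gadgets precisely when $\clause_j \cap \assignment \neq \emptyset$. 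Third, the \emph{distinguished vertex} $u$ is attached to a shared ``failure'' structure so that any residual domination or $I$-set collision forced by an unsatisfied clause can be cured only by placing $u$ in the code, without changing the overall budget $N$.

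The proof that this works splits into three claims, each of which is in principle mechanical once the gadget is fixed. (a) Lower bound: by counting forced contributions — one codeword per variable gadget from the twin/pendant attachment, plus the forced contributions of the clause and $u$-gadgets — every LD-code has cardinality at least $N$. (b) Satisfiability $\Rightarrow$ non-forced: given a satisfying $\assignment$, select the literal vertices dictated by $\assignment$ together with the locally forced codewords of the gadgets; verify directly that the resulting set of size $N$ is an LD-code of $G(\instance)$ and avoids $u$. (c) Non-forced $\Rightarrow$ satisfiability: conversely, from a minimum LD-code $S$ with $u \notin S$, read off the truth assignment $\assignment$ from $S \cap \{x_i, \overline{x}_i\}$ and use the $I$-set requirements at each $c_j$ to deduce $\clause_j \cap \assignment \neq \emptyset$.

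The main obstacle, and where the care must go, is in engineering the clause gadget so that the \emph{location} requirement — pairwise distinct and nonempty $I$-sets for all non-codewords — is simultaneously tight and robust. Domination alone is easy to arrange, but pairs of non-codeword clause vertices, or a clause vertex and a literal vertex from a distant variable gadget, can accidentally share $I$-sets and spuriously force $u$ even when $\instance$ is satisfiable; conversely, unsatisfied clauses must genuinely fail in a way that can only be patched by $u$, not by swapping a literal from true to false. I would resolve this by giving each clause and each literal a small private-neighbourhood signature (e.g., a degree-one or twin attachment of its own) that guarantees distinctness of $I$-sets across the whole graph once the variable assignment is fixed, thereby isolating the satisfaction condition of each $\clause_j$ as the sole remaining combinatorial constraint. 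Once the gadget survives this check, part (ii) follows by attaching $u$ through the complementary gadget (so that the roles of ``$u$ present'' and ``$u$ absent'' swap), completing both claims of the theorem.
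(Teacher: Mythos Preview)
Your high-level strategy matches the paper's --- reduce from 3-SAT via variable and clause gadgets, with a distinguished vertex whose forced/void status encodes unsatisfiability --- but what you have written is a plan, not a proof. The entire content of such a reduction lies in exhibiting a concrete gadget and verifying your claims (a)--(c) for it; you have not done this, and your own final paragraph correctly identifies that engineering the location requirement is the delicate step. Saying the verification ``is in principle mechanical once the gadget is fixed'' and that you ``would resolve this by giving each clause and each literal a small private-neighbourhood signature'' does not discharge the obligation. Until a specific graph is written down and the $I$-sets are checked, there is no proof.

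For comparison, the paper's construction is small and explicit. Each variable gadget is a $4$-cycle $x_i\,a_i\,\overline{x}_i\,b_i$: the twins $a_i,b_i$ force one of them into any LD-code, a second codeword is needed to dominate the other, and in a minimum code exactly one of $x_i,\overline{x}_i$ is chosen. Each clause gadget is a path $\gamma_j\,\beta_j\,\alpha_j$ with $\alpha_j$ adjacent to the three literals of $\clause_j$; dominating $\gamma_j$ forces one codeword here, and counting gives $\gamma^{LD}(G)=2n+m+1$. The distinguished vertex $w$ is adjacent to every $\alpha_j$ and carries a pendant $v$. If some clause is unsatisfied by the assignment read off from a minimum code $S$ with $w\notin S$, then $I(\alpha_j)=\{\beta_j\}=I(\gamma_j)$, a contradiction; conversely a satisfying assignment yields a minimum code avoiding $w$.

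One point where the paper is cleaner than your outline: you propose handling part~(ii) by ``a dual modification of the same gadget''. In fact no second construction is needed. Since $N[v]=\{v,w\}$, exactly one of $w$ and its pendant $v$ lies in any minimum LD-code, so $v$ is min-void if and only if $w$ is min-forced, and (ii) follows from (i) for free.
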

\begin{proof}

We reduce the 3-SAT problem polynomially to each of the problems studied.

Let $X = \{x_1, \dots, x_n\}$ be a set of variables and $\instance = \{ \clause_1, \clause_2, \cdots, \clause_m\}$ be a 3-SAT instance over $X$. 
For each variable $x_i \in X$, we construct a graph $G_{x_i}$ in the following way:
\[V(G_{x_i}) = \{x_i, \overline{x}_i, a_i, b_i\},\]
\[E(G_{x_i}) = \{x_i a_i,\, a_i \overline{x}_i,\, \overline{x}_i b_i,\, b_i x_i\}.\]

For each clause $\clause_j \in \instance$, we construct a graph $G_{\clause_j}$ in the following way:
\[V(G_{\clause_j}) = \{\alpha_j, \beta_j, \gamma_j\},\]
\[E(G_{\clause_j}) = \{\alpha_j \beta_j,\, \beta_j \gamma_j\}.\]

We combine the variable graphs and clause graphs into $G=(V,E)$ as follows:
\[V = \bigcup_{i=1}^n V(G_{x_i}) \cup \bigcup_{j=1}^m V(G_{\clause_j}) \cup \{w, v\},\]
\[E = \bigcup_{i=1}^n E(G_{x_i}) \cup \bigcup_{j=1}^m E(G_{\clause_j}) \cup \{\alpha_j u \sep u \in \clause_j, 1 \leq j \leq m\,\} \cup \{w\alpha_j \sep 1 \leq j \leq m\} \cup \{wv\}.\]
The graph $G$ is illustrated in Figure~\ref{fig:esim-G}.
We see that $\abs{V(G)} = 4n + 3m + 2$ and $\abs{E(G)} = 4n + 6m + 1$.

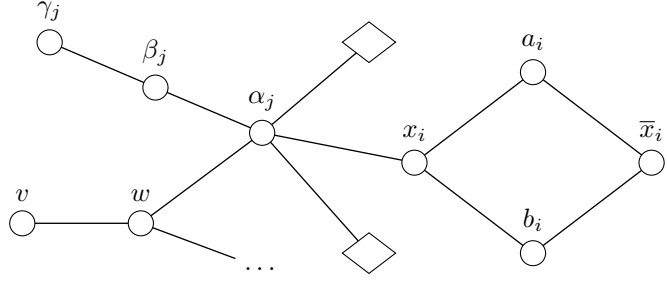
\begin{figure}
    \centering
    \begin{tikzpicture}[yscale=1.2, xscale=1.2, rotate=0]

    \renewcommand*{\EdgeLineWidth}{ 0.5pt}
    \SetVertexMath
    \SetVertexLabelOut

    {
    \tikzstyle{VertexStyle}=[circle,draw, fill=white]

    \Vertex[x=-.3, y=2, L=v, Lpos=90]{v}
    \Vertex[x=1, y=2, L=w, Lpos=90]{w}
    \Vertex[x=2.33, y=3, L=\alpha_j, Lpos=90]{a1}
    {
    \tikzstyle{VertexStyle}=[minimum size=1pt,inner sep=0pt, fill=none]
    \Vertex[x=3.5, y=4, L=\ , Lpos=0]{x33}
    \Vertex[x=3.5, y=1.67, L=\ , Lpos=0]{x11}
    \Vertex[x=3.5, y=4, L=\ , Lpos=0]{x3}
    \Vertex[x=3.5, y=1.67, L=\ , Lpos=0]{x1}
    }
    {
    \tikzstyle{VertexStyle}=[diamond, scale=1.3, aspect=1.3, draw, fill=white]
    \EA[unit=0, L=\ ](x11){tim1}
    \EA[unit=0, L=\ ](x33){tim3}
    }
    
    \Vertex[x=4, y=3-0.33, L=x_i, Lpos=90]{x2}

    \Vertex[x=5.3, y=4-0.33, L=a_i, Lpos=90]{a}
    \Vertex[x=6.6, y=3-0.33, L=\overline{x}_i, Lpos=90]{x5}
    \Vertex[x=5.3, y=2-0.33, L=b_i, Lpos=90]{b}

    \Vertex[x=1.16, y=3.5, L=\beta_j, Lpos=90]{bee}
    \Vertex[x=0, y=4, L=\gamma_j, Lpos=90]{gam}

    }
    {
    \tikzstyle{VertexStyle}=[fill=white,circle]
    \SetVertexLabelIn

    \Vertex[x=2.33, y=1.5, L=\dots]{dots}

    }

    \tikzstyle{EdgeStyle}=[bend right=0]
    \Edge(v)(w)    
    \Edge(w)(a1)
    \Edge(w)(dots)    

    \Edge(a1)(x2)
    \Edge(a1)(tim1)
    \Edge(a1)(tim3)
    
    \Edge(a1)(bee)
    \Edge(bee)(gam)
    
    \Edge(x2)(a)
    \Edge(x2)(b)
    \Edge(a)(x5)
    \Edge(b)(x5)

\end{tikzpicture}
    \caption{The vertex $w$ is connected to all vertices $\alpha_j$, represented by the ellipsis.
    The vertex $\alpha_j$ is connected to three variable gadgets, two of which are drawn as diamonds for the sake of simplicity.}
    \label{fig:esim-G}
\end{figure}

Regardless of whether $\instance$ is satisfiable, there is an LD-code $S$ in $G$ with $2n +m +1$ vertices:
\[S = \{ x_i, a_i \sep 1 \leq i \leq n\} \cup \{\beta_j \sep 1 \leq j \leq m\} \cup \{w\}.\]
The $I$-sets of vertices not belonging to $S$ are nonempty and distinct:
\begin{itemize}
    \setlength{\itemsep}{2pt}
    \setlength{\parskip}{2pt}
    \item $I_{G}(\alpha_j) \supseteq \{\beta_j, w\}$,
    \item $I_{G}(\gamma_j)=\{\beta_j\}$,
    \item $I_{G}(\overline{x}_i)=\{a_i\}$,
    \item $I_{G}(b_i)=\{x_i\}$, and
    \item $I_{G}(v)=\{w\}$.
\end{itemize}
Therefore, $\gamma^{LD}(G) \leq 2n +m +1$.

Next, we establish a lower bound for $\gamma^{LD}(G)$.
Each clause gadget $G_{\clause_{j}}$ necessarily contains at least one codeword to dominate $\gamma_j$: either $\beta_j$ or $\gamma_j$, since $N_G[\gamma_j] = \{\gamma_j, \beta_j\}$. 
Each variable gadget $G_{x_{i}}$ needs to contain at least two codewords: either $a_i$ or $b_i$, as they are twins, and one more to dominate the other one. 
Either $w$ or $v$ is in any LD-code in $G$, again,  since $N_G[v] = \{v, w\}$. Therefore, $\gamma^{LD}(G) = 2n +m +1$.

Let us consider some observations.

\begin{fact}
\label{fact:alpha}
    No minimum LD-code in $G$ contains the vertex $\alpha_j$ for any $j$.
That is because an LD-code containing $\alpha_j$ contains two codewords in the clause gadget $G_{\clause_{j}}$,
resulting in a code of at least $2n +m +2$ codewords.
\end{fact}

\begin{fact}
\label{fact:xx}
Exactly one of $x_i$ and $\overline{x}_i$ is in a minimum LD-code $S \subseteq V(G)$.
It follows from Fact~\ref{fact:alpha} that if $x_i, \overline{x}_i \notin S$, then $I_G(x_i), I_G(\overline{x}_i)  \subseteq \{a_i, b_i\}$ in any minimum LD-code $S \subseteq V(G)$.
Hence, $I_G(x_i) = I_G(\overline{x}_i)$ if $x_i, \overline{x}_i \notin S$.
Including both $x_i$ and $\overline{x}_i$ in $S$ would result in an LD-code of  at least $2n +m +2$ codewords.
Therefore, to distinguish the vertices $x_i$ and $\overline{x}_i$, exactly one of them is contained in every minimum LD-code. 
\end{fact}

\begin{fact}
\label{fact:wv}
    Exactly one of $w$ and $v$ is in a minimum LD-code $S \subseteq V(G)$.
\end{fact}

We claim that (i) the vertex $w$ is min-forced if and only if the 3-SAT instance $\instance$ is not satisfiable and (ii) the vertex $v$ is min-void if and only if the 3-SAT instance $\instance$ is not satisfiable.
We prove these statements by contraposition.

First, we assume that $\instance$ is satisfiable, and we show that $w$ is not min-forced and $v$ is not min-void.
If $\instance$ is satisfiable, then there exists a truth assignment $\assignment \subseteq \literals$ such that $\abs{\clause_j \cap \assignment } \geq 1$ for all clauses $\clause_j \in \instance$. 
Now
\[S = \assignment \cup \{a_i\sep 1 \leq i \leq n\} \cup \{\beta_j \sep 1 \leq j \leq m\} \cup \{v\}\]
is a minimum LD-code of $G$ with $v \in S$ and $w \notin S$.
The $I$-sets of vertices not belonging to $S$ are nonempty and distinct:
\begin{itemize}
\setlength{\itemsep}{2pt}
    \setlength{\parskip}{2pt}
    \item $I_{G}(\alpha_j) \supseteq \{\beta_j, u\}$, where $u$ is (one of) the true literal(s) in $\clause_j$, 
    \item $I_{G}(\gamma_j)=\{\beta_j\}$,
    \item if $x_i \in S$, then $I_{G}(\overline{x}_i)=\{a_i\}$, otherwise $\overline{x}_i \in S$ and $I_{G}(x_i)=\{a_i\}$,
    \item if $x_i \in S$, then $I_{G}(b_i)=\{x_i\}$, otherwise $\overline{x}_i \in S$ and  $I_{G}(b_i)=\{\overline{x}_i\}$, and
    \item $I_{G}(w)=\{v\}$.
\end{itemize}
In this case, we see that $w$ is not min-forced and $v$ is not min-void.

Next, we separately assume that $w$ is not min-forced and that $v$ is not min-void, and in both cases it follows that $F$ is satisfiable.
Here the proofs of claims (i) and (ii) briefly diverge.
\begin{enumerate}[(i)]
    \item Assume that $w$ is not min-forced, that is, there exists a minimum LD-code $S \subseteq V(G)$ such that $w \notin S$.
Recall Fact~\ref{fact:alpha}: the vertices $\alpha_j$ are never in a minimum LD-code of $G$.
Consider the $I$-set of the vertex $\alpha_j$ corresponding to the clause $\clause_j = \{u_{j,1},u_{j,2},u_{j,3}\}$.
If $\beta_j \in I(\alpha_j)$, then $\gamma_j$ is not a codeword and $I(\gamma_j) = \{\beta_j \}$.
For $I(\alpha_j)$ to be distinct, there must be $u_{j,k} \in I(\alpha_j)$ for some $k \in \{1,2,3\}$.
If $\beta_j \notin I(\alpha_j)$, then for $I(\alpha_j)$ to be nonempty, there must be $u_{j,k} \in I(\alpha_j)$ for some $k \in \{1,2,3\}$.

By Fact~\ref{fact:xx}, either $x_i \in S$ or $\overline{x}_i \in S$, but not both.
When we set the variable $x_i$ as \true if $x_i \in S$ and as \false if $\overline{x}_i \in S$, we get a valid truth assignment for the variables of $X$.
Under this assignment, each clause $\clause_j$ in $\instance$ contains at least one true literal, namely, the literal $u_{j,k}$ that is in the $I$-set of $\alpha_j$.
Therefore, $\instance$ is satisfiable.

    \item Now we assume that $v$ is not min-void, that is, there exists a minimum LD-code $S \subseteq V(G)$ such that $v \in S$.
    By Fact~\ref{fact:wv}, $v \in S$ implies that $w \notin S$, in other words, $w$ is not min-forced.

    Just like in the proof of claim (i), it follows that $\instance$ is satisfiable. 
\end{enumerate}

We have shown that the vertex $w$ is min-forced (respectively, $v$ is min-void) if and only if the 3-SAT instance $\instance$ is not satisfiable.
Thus, there exists a polynomial-time reduction of the complement of the 3-SAT problem to the problem of deciding whether a given vertex is min-forced (resp. min-void).
Hence, the studied problems are co-NP-hard.
\end{proof}

Note that the graph $\graphminus{G}{\{v, w\}}$ can be used to show that the decision problem ``is there a locating-dominating code $S\subseteq V(G)$ of size at most $k$?'' is NP-complete (see Lemma 3.1 in \cite{Charon2003}).
The graph $\graphminus{G}{\{v, w\}}$ has fewer vertices than the graph used in the proof in \cite{Charon2003}, which in turn needs fewer vertices compared to the proof of Colbourn et al. \cite{colbourn1987locating}.

\section*{Acknowledgements}

The authors have been partially supported by Research Council of Finland grant number 338797. Havu Miikonen has been partially supported by the Turku University Foundation. The authors would like to thank the anonymous referee for careful reading of the paper and insightful comments.

\bibliographystyle{abbrvnat}
\bibliography{refs}

\end{document}